\patchcmd{\section}{\scshape}{\bfseries}{}{} \makeatletter
\renewcommand{\@secnumfont}{\bfseries} \makeatother
\theoremstyle{definition}
\theoremstyle{plain} \newtheorem{theorem}{Theorem}[section]
\theoremstyle{plain} 
\theoremstyle{plain} \newtheorem{proposition}[theorem]{Proposition}
\theoremstyle{plain} 
\theoremstyle{plain} \newtheorem{corollary}[theorem]{Corollary}
\theoremstyle{remark} \newtheorem*{remark}{Remark}
\theoremstyle{definition} 
\theoremstyle{definition} \newtheorem*{definition*}{Definition}
\theoremstyle{definition} 
\theoremstyle{remark} 
\makeatletter \renewenvironment{proof}[1][\proofname]
{\par\pushQED{\qed}\normalfont\topsep6\p@\@plus6\p@\relax\trivlist\item[\hskip\labelsep\bfseries#1\@addpunct{.}]\ignorespaces}{\popQED\endtrivlist\@endpefalse}
\newcommand{\RR}{\mathbb{R}}
\newcommand{\QQ}{\mathbb{Q}}
\newcommand{\NN}{\mathbb{N}}
\newcommand{\ZZ}{\mathbb{Z}}
\newcommand{\calQ}{\mathcal{Q}}
\newcommand{\diam}{\text{diam}}
\newcommand{\dimH}{\dim_{\textup H}}
\renewcommand{\leq}{\leqslant} \renewcommand{\geq}{\geqslant}
\DeclareMathOperator{\Leb}{Leb}
\DeclarePairedDelimiter{\abs}{\lvert}{\rvert}
\DeclarePairedDelimiter{\floor}{\lfloor}{\rfloor}
\DeclarePairedDelimiter{\set}{\lbrace}{\rbrace}
\DeclarePairedDelimiter{\parens}{\lparen}{\rparen}
\DeclarePairedDelimiter{\brackets}{\lbrack}{\rbrack}
\def\1int{{[0,1]}}
\theoremstyle{plain}
\newtheorem{thm}{Theorem}[section]
\newtheorem{prop}[thm]{Proposition}
\newtheorem{ques}[thm]{Question}
\title{Diophantine approximation in metric space}
\author{Jonathan M. Fraser} \address{Mathematical Institute, University of St Andrews, Scotland, KY16 9SS}
\email{jmf32@st-andrews.ac.uk}
\author{Henna Koivusalo} \address{School of Mathematics, University of Bristol,  BS8 1UG}
\email{henna.koivusalo@bristol.ac.uk}
\author{Felipe A.~Ram\'irez} \address{Weslyan University, Middletown, CT, USA}
\email{framirez@wesleyan.edu}
\thanks{The authors were supported by a Mathematisches Forschungsinstitut Oberwolfach - Research in Pairs Grant.  JMF was financially supported by an \emph{EPSRC Standard Grant} (EP/R015104/1) and  a  \emph{Leverhulme Trust Research Project Grant} (RPG-2019-034).}
\begin{document}

\maketitle

\begin{abstract}
  Diophantine approximation is traditionally the study of how well
  real numbers are approximated by rationals. We propose a model for
  studying Diophantine approximation in an arbitrary totally bounded
  metric space where the rationals are replaced with a countable
  hierarchy of ``well-spread'' points, which we refer to as
  \emph{abstract rationals}.  We prove various Jarn\'ik--Besicovitch
  type dimension bounds and investigate their sharpness.
\end{abstract}

\section{Introduction}

\subsection{Diophantine approximation in a metric space}

Let $(F,d)$ be a non-empty totally bounded   metric space.  For $q \in \mathbb{N}$, define  \emph{abstract rationals of level $q$} by fixing a maximal $(1/q)$-separated set $P(q) \subseteq F$.  By $(1/q)$-\emph{separated} we mean that for all $p_1, p_2 \in P(q)$, $d(p_1,p_2) \geq 1/q$ and by  \emph{maximal} we  mean it is impossible to add another point to the set and maintain the separation.  Since $F$ is totally bounded, $P(q)$ is a finite set for all $q$. One should think of $P(q)$ as the set of rationals with denominator $q$ inside $F$.  We refer to the set $\cup_q P(q)$ as a set of \emph{abstract rationals}, noting that it is immediately a countable  dense subset of $F$.  For $t >0$ let
\begin{eqnarray*}
F_t &=& \{ x \in F  : \text{ i.o. for some  $p \in P(q)$ }\, d(p,x) \leq 1/q^t \} \\
& =& \bigcap_N \bigcup_{q>N} \bigcup_{p \in P(q)} B(p , 1/q^t) 
\end{eqnarray*}
where $B(x,r)$ generally denotes the closed ball centred at $x \in X$ with radius $r>0$.  Clearly, if $t \leq 1$, then $F_t = F$ and so we are interested in $t>1$.  Computing (or estimating)  the size (e.g. Hausdorff dimension) of $F_t$ describes ``how much of $F$ can be far away from a chosen  set of well spread out  points''.  

Classical Diophantine approximation may be cast in this language.  For
example, if $F=[0,1]$ with the Euclidean distance and
$P(q)= \mathbb{Z}/q \cap [0,1]$ are the rationals with denominator
$q$, then $F_t$ is the set of $t$-well-approximable real numbers in
$[0,1]$.  In this setting, Dirichlet's theorem tells us that
$F_2 = [0,1]$, the Borel--Cantelli lemma tells us that $F_t$ is a
null set for all $t>2$, and the Jarn\'ik--Besicovitch theorem gives
\[
\dim_\textup{H} F_t = 2/t
\]
for all $t \geq 2$, where $\dim_\textup{H}$ denotes the Hausdorff
dimension. That is, the exponent $t_0=2$ is a natural
  threshold beyond which it is sensible to study the dimensions of
  $F_t$. Our aim here is to establish analogous results in the general
  setting of abstract rationals.

  The inspiration for studying abstract rationals comes from problems
  of \emph{intrinsic} Diophantine approximation.  In such problems,
  one has a subset $X$ of $\mathbb R^n$, and the goal is to understand
  how well its points can be approximated by rational points which
  also lie on $X$. The set $X$ can for example be taken to be a
manifold \cite{FKMS18} or a fractal set \cite{BFR11}. Questions on
intrinsic Diophantine approximation on fractals have attracted much
attention and there have been many interpretations and answers to the
now-classical Mahler's problem: `How well can irrationals in the
Cantor set be approximated by rationals in the Cantor set?'
Interpretations include random rational points \cite{BD16}, intrinsic
rationals \cite{FMS18, FS14, LSV07, S21}, and dynamical rationals
\cite{B19, AB}.

In Section \ref{sec:Main results} we provide analogues to the
Jarn\'ik--Besicovitch theorem in metric spaces using the language of
abstract rationals.  It turns out that the problem is subtle and the
answer depends on how the abstract rationals are chosen. Despite the
generality of the notion of abstract rationals, the bounds we prove in
this general set-up are sharp in many cases of particular
interest. For example, Theorem \ref{main} below demonstrates that the
upper bound for the dimension value in the Jarn\'ik--Besicovitch
theorem holds in much more generality, replacing ``$2$'' with
``$\dim_\textup{P}F + 1$'' where $\dim_\textup{P}$ denotes the
\emph{packing} dimension. We also show in Proposition \ref{sharpness1}
that in general one cannot replace ``$2$'' with
``$\dim_\textup{H}F + 1$'' or ``$2\dim_\textup{H}F$'', for example.
In order to estimate $\dim_\textup{H} F_t$ in general we naturally
encounter several different notions of fractal dimension, which we
recall in the following subsection.

We obtain stronger dimension bounds in the special case when $F$ is Ahlfors--David regular, see Theorem \ref{adthm}.  In Section \ref{sec:spectrum}, during the investigation of the
sharpness of the bounds in the Ahlfors--David regular case, we
encounter the notions of {\em Dirichlet exponent} and {\em Dirichlet
  spectrum}. By Dirichlet exponent we mean the threshold $t_0$ after
which $F_{t}$ does not (essentially) contain a ball. The
  definition is a natural extension of the classical Dirichlet
  exponent ($t_0=2$) to the setting of abstract rationals. We find
  that in general the Dirichlet exponent of $F$ depends on the choice
  of abstract rationals, so we are occasioned to consider the
  collection of possible Dirichlet exponents for a given $F$---the
  Dirichlet spectrum of $F$. The main result of Section~\ref{sec:Main
    results} and the mass transference principle of Beresnevich and
  Velani~\cite{BV06} allow us to bound the Dirichlet spectrum and
  reveal an interplay between Dirichlet exponents and the dimensions
  of $F_t$. In studying that interplay, we find that the dimension
  bounds from Theorem   \ref{adthm} are
  sharp.

\subsection{Dimension theory}

An important problem in the study of fractal objects is to characterise their dimension.  There are many ways to define dimension and the relationship between the notions is especially interesting, and relevant to us. See \cite{falconer, fraser} for more background on dimension theory and fractal geometry. 

Given $s \geq 0$,  the \emph{$s$-dimensional Hausdorff measure} of a set $X$ in a metric space is defined by
\[
\mathcal{H}^s (X) = \lim_{ r \to 0} \, \inf \bigg\{ \sum_{i } \lvert U_i \rvert^s : \{ U_i \}_{i} \text{ is a countable $ r$-cover of $X$} \bigg\}.
\]
The \emph{Hausdorff dimension} of $X$ is then 
\[
\dim_\textup{H} X = \inf \Big\{ s \geq 0: \mathcal{H}^s (X) =0 \Big\} = \sup \Big\{ s \geq 0: \mathcal{H}^s (X) = \infty \Big\}.
\] 
A subtle feature of the Hausdorff dimension is that sets of different sizes may be used in the covers and their contribution is weighted accordingly.  The (upper) box dimension is a simpler notion of dimension which avoids this subtlety.  The \emph{upper box dimension} of $X$ is
\[
\overline{\dim}_\textup{B} X = \limsup_{ r \to 0} \,  \frac{\log N_r (X)}{-\log  r},
\]
where $N_ r (X)$ is maximum cardinality of an $ r$-separated subset of $X$.  If the $\limsup$ is really a limit then we refer to the box dimension, written $\dim_\text{B} X$. Note that if $X$ is not totally bounded then $N_r(X)$ may be infinite. 

The  packing dimension of $X$ may be defined via packing measure in an analogous way to the Hausdorff dimension, but we use a common alternative formulation in terms of the upper box dimension.   We define the \emph{packing dimension} of $X$ by
\[
\dim_\textup{P} X= \inf \left\{ \sup_i \overline{\dim}_\textup{B}X_i \ : \ X=\bigcup_i X_i\right\}. 
\]
It should be clear from the definitions that
\[
\dim_\textup{H} X \leq \dim_\textup{P} X \leq \overline{\dim}_\textup{B} X .
\]
Finally, the \emph{lower dimension} of $X$ is defined by
\begin{eqnarray*} \index{lower dimension}
\dim_\textup{L} X \ = \  \sup \Bigg\{ \  s \geq 0 &:& \text{     there exists a constant $C >0$ such that,} \\
&\,& \hspace{10mm}  \text{for all $0<r<R \leq |X| $ and $x \in X$, } \\ 
&\,&\hspace{27mm}  \text{$ N_r\big( B(x,R) \cap X \big) \ \geq \ C \bigg(\frac{R}{r}\bigg)^s$ } \Bigg\}
\end{eqnarray*}
where $|X|$ denotes the diameter of $X$.  The lower dimension is the natural dual to the Assouad dimension and identifies the ``thinnest part'' of the space.  If $X$ is closed, then 
\[
\dim_\textup{L} X  \leq \dim_\textup{H} X.
\]
See \cite[Chapter 3]{fraser} for more background on the lower dimension, including its relevance in a different area of Diophantine approximation, e.g. \cite[Section 14.2]{fraser}. 

\section{Main results}\label{sec:Main results}

\subsection{General bounds}

Our first result provides general bounds for the dimension of $F_t$ which hold independently of how the abstract rationals are chosen.  The general lower bound assumes compactness of $F$.  If $F$ is not compact, then this lower bound does not hold. For example, if $F=[0,1]\cap \mathbb Q$, then the packing dimension of $F=F_1$ is $0$ and the lower dimension is $1$.

\begin{thm} \label{main}
For all $t \geq 1$
\[
\dim_\textup{H} F_t \leq \frac{\dim_\textup{P} F+1}{t}
\]
and, if $F$ is compact,
\[
\dim_\textup{P} F_t \geq \frac{\dim_\textup{L} F}{t}.
\]

\end{thm}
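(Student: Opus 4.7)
The plan is to prove the two inequalities separately. The upper bound comes from a Borel--Cantelli cover combined with the decomposition definition of packing dimension; the lower bound from a Cantor-type construction in $F_t$, upgraded to packing dimension by Baire category.

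\textbf{Upper bound.} Fix $s>\dim_\textup{P} F$ and take a decomposition $F=\bigcup_i X_i$ with $\overline{\dim}_\textup{B} X_i\leq s$ for every $i$. Set $P_i(q)=P(q)\cap X_i$ and
\[
F_t^i=\bigcap_N\bigcup_{q>N}\bigcup_{p\in P_i(q)}B(p,1/q^t).
\]
A pigeonhole argument gives $F_t=\bigcup_i F_t^i$, since any $x\in F_t$ with infinitely many witnesses $p_q\in P(q)$ must have infinitely many of them lying in a common $X_i$. For fixed $i$ and $s'>\overline{\dim}_\textup{B} X_i$ we have $|P_i(q)|\leq N_{1/q}(X_i)\leq q^{s'}$ for $q$ large, so if $\alpha>(s'+1)/t$ then
\[
\mathcal{H}^\alpha_{2/N^t}(F_t^i)\leq 2^\alpha\sum_{q>N}q^{s'-t\alpha}\xrightarrow{N\to\infty}0,
\]
giving $\dim_\textup{H} F_t^i\leq(\overline{\dim}_\textup{B} X_i+1)/t\leq(s+1)/t$; a countable union and $s\searrow\dim_\textup{P} F$ conclude.

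\textbf{Lower bound.} Fix $s<\dim_\textup{L} F$ and an auxiliary parameter $a>1$; I build a compact Cantor-type set $K\subseteq F_t$. Starting from $p_0\in P(q_0)$ with $\rho_0=1/q_0^t$, at stage $n+1$ choose $q_{n+1}=q_n^{at}$ (so $\rho_{n+1}=\rho_n^{at}$) and inside each parent ball $B(p_n,\rho_n)$ select children $E_{n+1}\subseteq P(q_{n+1})\cap B(p_n,\rho_n)$ with $|E_{n+1}|\geq c(\rho_n q_{n+1})^s$. Since $\rho_{n+1}<1/q_{n+1}$ and $P(q_{n+1})$ is $1/q_{n+1}$-separated, the child balls are pairwise disjoint and sit inside the parent for $q_{n+1}$ large. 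The limit set $K$ lies in $F_t$ by construction, and a direct geometric-series calculation gives $\overline{\dim}_\textup{B} K\geq s(a-1)/(at-1)$. The same lower bound holds locally in every relative ball $K\cap B(p_n,\rho_n)$ because the sub-tree rooted there is a Cantor set of the same type, and Baire category in the compact (hence Polish) space $K$ then upgrades this to $\dim_\textup{P} K\geq s(a-1)/(at-1)$: in any countable decomposition of $K$, one piece must be dense in some relative ball, which forces its upper box dimension to be at least $s(a-1)/(at-1)$. Sending $a\to\infty$ (making the right-hand side approach $s/t$) and $s\nearrow\dim_\textup{L} F$ completes the proof.

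\textbf{Main obstacle.} The key technical estimate is $|P(q_{n+1})\cap B(p_n,\rho_n)|\geq c(\rho_n q_{n+1})^s$. The lower-dimension bound produces many $(1/q_{n+1})$-separated points in $B(p_n,\rho_n)\cap F$, but these need not lie in $P(q_{n+1})$. The fix is to apply the lower-dimension bound at scale $2/q_{n+1}$ instead: the resulting $(2/q_{n+1})$-separated set $Y$ is still of the right size, and the ``nearest abstract rational'' map $y\mapsto p(y)\in P(q_{n+1})$ (existing by maximality with $d(y,p(y))<1/q_{n+1}$) is forced to be \emph{injective}, yielding the abstract rationals needed near $p_n$.
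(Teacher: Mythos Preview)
Your upper-bound argument has a genuine gap in the pigeonhole step. You claim that if $x\in F_t$ has infinitely many witnesses $p_q\in P(q)$, then infinitely many of them must lie in a common $X_i$; but when the decomposition $F=\bigcup_i X_i$ is countably infinite this is simply false (nothing prevents the $n$th witness from lying in $X_n$). Since packing dimension is defined through \emph{countable} decompositions, you cannot reduce to finitely many pieces. The remedy, and this is what the paper does, is to avoid the pigeonhole altogether and cover $F_t\cap X_i$ directly rather than your $F_t^i$: if $x\in F_t\cap X_i$ and $d(x,p)\leq q^{-t}$ for some $p\in P(q)$, then automatically $d(p,X_i)\leq 1/q$, so
\[
F_t\cap X_i \ \subseteq \ \bigcup_{q>N}\ \bigcup_{\substack{p\in P(q):\\ d(p,X_i)\leq 1/q}} B(p,q^{-t})
\]
for every $N$, and one then bounds the inner count by $\lesssim q^{s'}$. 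Your count $|P(q)\cap X_i|\leq N_{1/q}(X_i)$ is perfectly correct, but it is attached to the wrong decomposition of $F_t$.

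Your lower-bound argument is correct and takes a genuinely different route from the paper's. The paper builds the analogous Cantor set with a much faster sequence (satisfying $q_1\cdots q_k\leq\log q_{k+1}$), equips it with the natural uniform mass distribution $\mu$, and shows $\mu(B(x,q_k^{-t}))\leq (\log q_k)^{tl}\,q_k^{-l}$ for every $x$ in the set; this gives upper local dimension $\geq l/t$ everywhere and hence packing dimension $\geq l/t$ via standard local-dimension estimates. You instead use geometric growth $q_{n+1}\approx q_n^{at}$, compute $\overline{\dim}_{\mathrm B}$ of each cylinder directly as $\geq s(a-1)/(at-1)$, upgrade to packing dimension via Baire category in the compact set $K$, and send $a\to\infty$. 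Your approach is arguably more elementary (no measures required) at the price of carrying the auxiliary parameter $a$; the paper's is cleaner once one has the local-dimension machinery in hand. The ``nearest abstract rational'' injectivity trick you isolate at the end is needed in both proofs to pass from separated points of $F$ to genuine elements of $P(q_{k+1})$; the paper glosses over this point where you make it explicit.
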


\begin{proof}
We begin with the general upper bound. Let $b>\dim_\textup{P} F$ and let 
\[
F= \bigcup_i F_i
\]
be a countable decomposition of $F$ with the property that $ \overline{\dim}_\textup{B} F_i <b$ for all $i$.  Temporarily fix an index $i$.  We write
\[
d(p,F_i) = \inf \{ d(p,x) : x \in F_i\}.
\]
For all $N \geq 1$
\[
\bigcup_{q>N} \bigcup_{\substack{p \in P(q): \\ d(p,F_i) \leq 1/q}}\{B(p , 1/q^t) \}
\]
provides a $2/N^t$-cover of $F_t \cap F_i$.  Therefore, for some constant $C_1 \geq 1$ depending on $F$, $i$ and  $b$,
\begin{eqnarray*}
\mathcal{H}^s_{2/N^t} (F_t \cap F_i) &\leq& \sum_{q>N} \sum_{\substack{p \in P(q): \\ d(p,F_i) \leq 1/q}} |B(p,1/q^t)|^s  \\ \\
&=&  \sum_{q>N} (\#\{ p \in P(q):d(p,F_i) \leq 1/q\} )(2/q^t)^s  \\ \\
&\leq & 2^s C_1 \sum_{q \geq 1} q^{b-ts} < \infty
\end{eqnarray*}
provided $s > \frac{b+1}{t}$.  For such $s$
\[
\mathcal{H}^s(F_t) \leq \sum_i  \mathcal{H}^s (F_t \cap F_i) = 0
\]
and the desired result follows.

Turning our attention to the general lower bound, let $0<l < \dim_\textup{L} F$ (we may assume the lower dimension is strictly positive). Fix a rapidly increasing sequence of integers $q_k$ satisfying
\begin{equation} \label{rapid}
q_1 q_2 \cdots q_k \leq \log(q_{k+1})
\end{equation}
 and construct a nested decreasing sequence of compact sets $E_k \subseteq F_t$ by setting $E_1 = B(z,q_1^{-t})$ with $z \in F$ chosen arbitrarily and then letting $E_k$ be the union of the closed balls $B(p,q_k^{-t})$ with $p \in P(q_k)$ which lie completely inside $E_{k-1}$.  The sets $E_k$ are compact since $F$ is compact and so by the  Cantor intersection theorem
\[
E = \bigcap_{k} E_k \subseteq F_t
\]
with $E$ non-empty and compact.  We show $\dim_\textup{P} E \geq l/t$, which completes the proof. Let $\mu$ be a probability measure supported on $E$ constructed by assigning unit mass to $E_1$ and then, given a $k$-level cylinder $B(p,q_k^{-t}) $ used in the construction of $E_k$, redistribute the mass of $B(p,q_k^{-t}) $ equally among the $(k+1)$-level cylinders lying inside.  Since the $q_k$ are rapidly increasing and $P(q_k)$ is $(1/q_k)$-separated, using the definition of lower dimension, we may assume that there are at least
\[
 \left( \frac{q_k^{-t}}{q_{k+1}^{-1}} \right)^l
\]
$(k+1)$-level cylinders  inside every $k$-level cylinder.  For each $k$, consider $r = q_k^{-t}$.  It follows that any ball $B(x,r)$ with $x \in E$ intersects at most one $k$-level cylinder.  Recall that the centres of the $k$-level cylinders are $(1/q_k)$-separated.  Therefore
\begin{eqnarray} \label{massestimate}
\mu(B(x,r)) &\leq&     \left( \frac{q_1^{-t}}{q_{2}^{-1}} \right)^{-l}  \left( \frac{q_2^{-t}}{q_{3}^{-1}} \right)^{-l} \cdots  \left( \frac{q_{k-1}^{-t}}{q_{k}^{-1}} \right)^{-l} \nonumber  \\  \nonumber \\
& = &  q_1^{tl}(q_2 \cdots q_{k-1})^{tl-l} (q_k^{-1})^{l} \\  \nonumber \\
&\leq &  \left(\log(q_k)\right) ^{tl} (q_k^{-1})^{l}  \qquad \text{by \eqref{rapid}}  \nonumber \\  \nonumber  \\
&= &   \left(\frac{|\log r|}{t}\right) ^{tl} r^{l/t} .  \nonumber 
\end{eqnarray}
This proves that for all $ x \in E$, the upper local dimension of $\mu$ at $x$ is at least $l/t$.  Applying standard estimates for packing dimension coming from local dimension of measures, e.g. \cite[Proposition 2.3]{techniques}, this proves that $\dim_\textup{P} E \geq l/t$ as required.
\end{proof}

\subsection{Sharpness of general bounds}

\subsubsection{Sharpness of upper bound}

Comparing the upper bound from Theorem \ref{main} with the classical
Jarn\'ik--Besicovitch theorem, one might ask if the packing dimension can be
replaced by the Hausdorff dimension, or indeed if an upper bound of
the form $c \dim_\textup{H} F/t$ is possible for some uniform constant
$c$.  The following proposition shows that this is not possible and
demonstrates that the packing dimension should appear in bounding the
Hausdorff dimension of $F_t$.

\begin{prop} \label{sharpness1}
For all $t > 2$, there exists a compact set $F \subseteq [0,1]$  and a choice of abstract rationals such that 
\[
\dim_\textup{P}F =1
\]
and
\[
\dim_\textup{H}F_t = \dim_\textup{H}F =   2/t = \frac{ \dim_\textup{P}F+1}{t}.
\]
In particular we cannot replace the packing dimension by the Hausdorff dimension in Theorem \ref{main}.
\end{prop}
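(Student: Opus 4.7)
The plan is to upgrade the bound from Theorem~\ref{main} to an equality by constructing a specific compact $F \subseteq [0,1]$ together with a choice of abstract rationals. Since $\dim_\textup{H} F_t \leq (\dim_\textup{P} F + 1)/t = 2/t$ whenever $\dim_\textup{P} F = 1$, the upper bound is free; everything comes down to arranging a matching lower bound $\dim_\textup{H} F_t \geq 2/t$ and to verifying the two dimension values.

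Set $\alpha = 2/t$ and fix a rapidly increasing integer sequence $(q_k)$. I would take $F$ to be a Moran-type Cantor set in $[0,1]$ whose construction steps alternate between a \emph{sparse} type (at which the number and size of children are chosen so that the level witnesses an $\mathcal{H}^\alpha$-cover of $F$ of cost of order one, forcing $\dim_\textup{H} F \leq \alpha$) and a \emph{dense} type (at which each parent interval is refined by as many children as will fit, producing roughly $q_k$ approximately equidistributed pieces at scale $\asymp 1/q_k$ and so upper box dimension $1$ along those scales). Standard alternating-Moran calculations, in the spirit of~\cite{fraser}, then give $\dim_\textup{H} F = \alpha$ and $\dim_\textup{P} F = 1$. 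At each dense level $q = q_k$ I would take $P(q_k)$ to be an approximately equidistributed maximal $(1/q_k)$-separated subset of $F$ consisting essentially of the dense-level construction centres, so that $P(q_k)$ plays the same role inside $F$ as the classical rationals $\{p/q_k : 0 \leq p \leq q_k\}$ do in $[0,1]$; for the remaining $q$ any maximal $(1/q)$-separated subset suffices.

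With this setup, $F_t$ contains the limsup set obtained from $\asymp q_k$ equispaced balls of radius $1/q_k^t$ inside $F$, which is essentially an abstract Jarn\'ik--Besicovitch set. The lower bound $\dim_\textup{H} F_t \geq \alpha$ can be obtained by a Frostman mass distribution assigning weight $|P(q_k)|^{-1}$ to each stage-$k$ approximating ball and checking that the resulting measure has local dimension $\alpha$ at every point of the limsup; alternatively one can appeal to the mass transference principle of Beresnevich--Velani~\cite{BV06}, which transfers the trivially ``full at scale $1/q_k$'' covering by $P(q_k)$ down to the finer scale $1/q_k^t$ with the correct dimension. The main obstacle is the interplay between the intermediate sparse Moran steps and the limsup: between consecutive dense levels $q_k$ and $q_{k+1}$ the set $F$ is cut by sparse steps, so the limsup living inside $F$ differs from the naive limsup in the ambient $[0,1]$. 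This is handled by choosing $q_{k+1}$ growing fast enough relative to $q_k$ so that the sparse cut loses only $\mathcal{H}^\alpha$-measure, not dimension, and by verifying that the Frostman measure remains well-defined across both kinds of level.
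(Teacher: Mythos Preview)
Your approach is quite different from the paper's, and there is a genuine gap in the lower-bound step.

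The paper does not build $F$ as a Moran set at all. It starts from the classical $t$-well-approximable set $E_t\subset[0,1]$, uses $\mathcal H^{2/t}(E_t)=\infty$ to extract a compact $E_t^*\subset E_t$ with $\dim_\textup{H}E_t^*=2/t$, sets
\[
F=E_t^*\cup\{p/q:\ |p/q-x|\le 1/q\ \text{for some }x\in E_t^*\},
\]
and takes the \emph{genuine} rationals $P(q)=\mathbb Z/q\cap F$ as abstract rationals. Then $E_t^*\subseteq F_t$ gives $\dim_\textup{H}F_t\ge 2/t$ for free, and $\dim_\textup{P}F=1$ is not constructed but \emph{deduced} a posteriori from Theorem~\ref{main}. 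No mass distribution or mass transference is needed.

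In your scheme the lower bound does not come out as $2/t$. Along your lacunary sequence $(q_k)$ you place roughly $q_k$ balls of radius $q_k^{-t}$, and your Frostman measure assigns each stage-$k$ ball mass $\asymp q_k^{-1}$. At scale $r=q_k^{-t}$ this gives $\mu(B(x,r))\asymp q_k^{-1}=r^{1/t}$, so the mass distribution principle yields only $\dim_\textup{H}F_t\ge 1/t$, not $2/t$. Mass transference hits the same wall: in the ambient $[0,1]$ the limsup of the unshrunk balls $\bigcup_{p\in P(q_k)}B(p,1/q_k)$ does not have full Lebesgue measure (your $F$ is nowhere dense), and $F$ itself is not Ahlfors--David regular, so \cite{BV06} does not apply; even heuristically, transferring a single family of $q_k$ balls from radius $1/q_k$ to $1/q_k^t$ sends dimension $1$ to $1/t$. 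The extra factor of $2$ in Jarn\'ik--Besicovitch comes precisely from using \emph{all} denominators $q$, not a lacunary subsequence. An alternating Moran construction of exactly the type you describe is in fact the paper's \emph{next} proposition, and there it produces only $\dim_\textup{H}F=1/t$; raising $\dim_\textup{H}F$ to $2/t$ forces branching during your ``sparse'' phase, after which $F_t$ is a genuinely thin subset of $F$ whose dimension you have not controlled.
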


\begin{proof}
Let $E_t$ denote the set of $t$-well-approximable numbers in $[0,1]$ in the classical setting.  Then
\[
 \dim_\textup{H}E_t =   2/t
\]
and $\mathcal{H}^{2/t}(E_t) = \infty$.  This is the Jarn\'ik--Besicovitch theorem, but we emphasise the infinitude of the Hausdorff measure which we need for what follows, see \cite[Theorem 11]{velanibook}.    By \cite[Theorem 4.10]{falconer} there exists a compact subset $E_t^* \subseteq E_t$ such that
\[
 \dim_\textup{H}E_t^* =   2/t.
\]
Let 
\[
F=E_t^* \cup \{ p/q : p \in \mathbb{Z}, \, q \in \mathbb{N}, \, |p/q-x| \leq 1/q \text{ for some } x \in E_t^*\}.
\]
Then $F$ is compact and 
\[
\dim_\textup{H}F = \dim_\textup{H}E_t =  2/t.
\]
The abstract rationals $P(q)$ are then chosen to be $P(q) = \mathbb{Z}/q \cap F$. By construction $E_t^* \subseteq E_t \cap F = F_t$ and so
\[
\dim_\textup{H}F_t = \dim_\textup{H}F =   2/t.
\]
Moreover, by Theorem \ref{main}, 
\[
2/t = \dim_\textup{H}F_t  \leq \frac{\dim_\textup{P}F+1}{t}
\]
which forces $\dim_\textup{P}F =1$, completing the proof.
\end{proof}

The next result is along similar lines but this time the construction works for arbitrary choices of abstract rationals.

\begin{prop}
For all integers $t \geq 2$, there exists a compact set $F \subseteq [0,1]$ such that 
\[
F_s = F
\]
for all $1 \leq s < t$ and all choices of abstract rationals $\mathcal{P} = \bigcup_qP(q)$.  In particular,  for all $t \geq 1$,
\[
\sup_F \inf_\mathcal{P}\frac{\dim_\textup{H} F_t}{\dim_\textup{H} F} = 1
\]
where the supremum is taken over compact sets $F \subseteq [0,1]$ with $\dim_\textup{H} F>0$ and the infimum is taken over all choices of abstract rationals for $F$.  
\end{prop}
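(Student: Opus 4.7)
The plan is to realize $F$ as a Moran-type nested set in which clusters at each of an increasing sequence of scales $q_n$ are both extremely tight (diameter at most $q_n^{-t}$) and well-separated (at distance at least $1/q_n$). The combination will force every maximal $(1/q_n)$-separated subset of $F$, regardless of the choice of abstract rationals, to land within $q_n^{-t}$ of every point of $F$.

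Fix a rapidly growing sequence $q_1<q_2<\cdots$, for definiteness $q_n = 2^{(t+2)^n}$, and set $M_1 = \lfloor q_1/3\rfloor$, $M_n = \lfloor q_n q_{n-1}^{-t}/3\rfloor$ for $n\ge 2$. At each stage $n\ge 0$ we maintain a finite disjoint collection $\mathcal{C}_n$ of compact \emph{level-$n$ clusters} $C\subseteq[0,1]$ satisfying
\[
\diam(C)\le q_n^{-t},\qquad d(C,C')\ge 1/q_n\text{ for distinct }C,C'\in \mathcal{C}_n,
\]
arranged so that each $C\in\mathcal{C}_n$ is the disjoint union of exactly $M_{n+1}$ members of $\mathcal{C}_{n+1}$. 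Starting from $\mathcal{C}_0=\{[0,1]\}$, the subdivision at each step is feasible because a parent cluster of diameter $q_n^{-t}$ comfortably accommodates $M_{n+1}\approx q_{n+1}q_n^{-t}$ children of diameter $q_{n+1}^{-t}$ separated by $1/q_{n+1}$. Let $F=\bigcap_n\bigcup_{C\in\mathcal{C}_n}C$, a compact subset of $[0,1]$.

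The heart of the argument is a cluster-recognition principle at each scale $1/q_n$. Because $t>1$, every $C\in\mathcal{C}_n$ has diameter strictly less than $1/q_n$, so any $(1/q_n)$-separated subset of $F$ contains at most one point per cluster. Conversely, if a maximal such set $P(q_n)$ missed some $C$, then any $x\in C$ would be at distance $\ge 1/q_n$ from every point of $P(q_n)$ by the inter-cluster separation, contradicting maximality. Hence $P(q_n)$ meets every level-$n$ cluster; for any $x\in F$ the point of $P(q_n)$ inside the level-$n$ cluster of $x$ is within $q_n^{-t}$ of $x$, so $x\in F_t$. Since this reasoning applies at every $q_n$ and is independent of the choice of $\mathcal{P}$, we conclude $F_t=F$, and a fortiori $F_s=F$ for all $1\le s<t$, giving the first assertion.

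To produce $\dim_\textup{H} F>0$, let $\mu$ be the natural Moran measure assigning mass $(M_1\cdots M_n)^{-1}$ to each level-$n$ cluster. With the chosen growth rate, $\log M_n\asymp\log q_n$, and a routine multi-scale computation yields a uniform bound $\mu(B(x,r))\le K r^{s_0}$ for some $s_0>0$ depending on $t$; the mass distribution principle then gives $\dim_\textup{H} F\ge s_0$. The main obstacle lies precisely here: the estimate must be verified not only at the cluster scales $r\sim q_n^{-t}$ but also at intermediate scales $r\in(1/q_{n+1},1/q_n)$, where a ball may meet many level-$(n+1)$ children of a single parent. The very rapid growth of $(q_n)$ makes each regime tractable but the bookkeeping is what drives the choice of growth rate.

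For the supremum-infimum formula, given any $t\ge 1$ pick an integer $t'>t$ with $t'\ge 2$ and apply the construction above to obtain a compact $F\subseteq[0,1]$ with $\dim_\textup{H} F>0$ and $F_{t'}=F$ for every $\mathcal{P}$. Since $t<t'$ implies $F_{t'}\subseteq F_t\subseteq F$, we get $F_t=F$, so $\dim_\textup{H} F_t/\dim_\textup{H} F=1$ uniformly in $\mathcal{P}$. Combined with the trivial bound $F_t\subseteq F$, this shows the supremum equals $1$.
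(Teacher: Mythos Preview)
Your proof is correct and follows essentially the same strategy as the paper: build a compact set that, at infinitely many scales $q_n$, decomposes into clusters of diameter at most $q_n^{-t}$ separated by gaps of at least $1/q_n$, and then use maximality of $P(q_n)$ to force at least one abstract rational into every cluster, giving $F_t=F$ regardless of $\mathcal{P}$. The paper realises this same geometry via an alternating IFS (apply $\{x/2,(x+1)/2\}$ for $k_n$ steps, then $\{x/2\}$ for $(t-1)k_n$ steps), which has the advantage that $\dim_{\textup H} F = 1/t$ drops out of the standard formula $\liminf N(k)/k$ with no mass-distribution bookkeeping, whereas your explicit Moran construction obliges you to carry out the multi-scale measure estimate you sketch.
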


\begin{proof}
Consider an iterated function system (IFS) type construction which alternates between $\{x \mapsto x/2, x \mapsto(x+1)/2\}$ and $\{x \mapsto x/2\}$. See \cite{falconer} for background on IFS theory.  Let $N(k)$ be the number of times we choose the first IFS in the first $k$ levels.  Then it is standard that
\[
\dim_\textup{H} F = \liminf_{k \to \infty} N(k)/k
\]
and
\[
\dim_\textup{P} F = \limsup_{k \to \infty} N(k)/k.
\]
Let $k_n$ be a very rapidly increasing sequence of integers, e.g. $k_n = n^n$. Fix an integer  $t \geq 2$ and construct $F$ inductively by using the first IFS  $k_n$ times, followed by the second IFS $(t-1)k_n$ times.  This is repeated in blocks of $tk_n$.  Then
\[
\dim_\textup{H} F = 1/t < 1 = \dim_\textup{P} F.
\]
Consider $x \in F$ and especially the cylinder $E$ of level
\[
M=t \sum_{n=1}^{N} k_n 
\]
containing $x$. (Just after the $N$th long run of the second IFS).   This cylinder has size $2^{-M} \leq 2^{-tk_N}$.  There are very large holes to the left and right of this cylinder, of size
\[
>  2 \cdot 2^{-k_N\Delta}
\]
where $\Delta>1$ can be chosen arbitrarily provided $N$ is large enough.  Due to the presence of these holes, there must exist $p \in P(q) \cap E$ for some $q$ with $1/q \geq 2^{-k_N\Delta}$.  This ensures that the whole set $F$ is contained in the 
\[
2^{-tk_N} = \left(2^{-k_N\Delta} \right)^{t/\Delta} \leq (1/q)^{t/\Delta}
\]
neighbourhood of $P(q)$ for infinitely many $q$.  This ensures that
\[
F_{t/\Delta} = F
\]
for all $\Delta>1$.
\end{proof}


\subsubsection{Sharpness of lower bound}

Comparing the lower bound from Theorem \ref{main} with the classical
Jarn\'ik--Besicovitch theorem, one might ask if the lower dimension
can be replaced by the Hausdorff dimension, or indeed if a lower bound
of the form $(\dim_\textup{L} F+1)/t$ or $c\dim_\textup{H} F/t$ is
possible for some uniform constant $c$.  The following proposition
shows that these bounds are not possible in general and demonstrates
that $F_t$ ($t >1$) can be very small, even countable, if the abstract
rationals are ``far away'' from the large parts of $F$.

\begin{prop} \label{example}
There exists a compact set $F \subseteq [0,1]$ with $\dim_\textup{H} F = \dim_\textup{P} F >0$ and such that, for some choice of abstract rationals, $F_t$ is countable for all $t>1$ and, for another choice of abstract rationals,
\[
\dim_\textup{H} F_t  >0
\]
for all $t>1$.
\end{prop}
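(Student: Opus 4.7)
The plan is to construct a specific compact set $F \subseteq [0,1]$ satisfying $\dim_\textup{H} F = \dim_\textup{P} F > 0$ and---what is crucial in view of the lower bound in Theorem~\ref{main}---$\dim_\textup{L} F = 0$, together with two explicit choices of abstract rationals realising the two extremes. A natural candidate is
\[
F = \{0\} \cup \bigcup_{n \geq 1} C_n,
\]
where $(C_n)$ is a sequence of pairwise disjoint scaled self-similar Cantor subsets of $[0,1]$ accumulating only at $0$, with $s_n := \dim_\textup{H} C_n$ chosen so that $s_n \to 0$ while $s := \sup_n s_n > 0$. This yields $F$ compact with $\dim_\textup{H} F = \dim_\textup{P} F = s$, and $\dim_\textup{L} F = 0$ because near $0$ the set becomes arbitrarily thin (only low-$s_n$ pieces are visible in small neighbourhoods of the origin).

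For the choice of abstract rationals giving $\dim_\textup{H} F_t > 0$, I would include in $P(q)$ the endpoints of the natural Cantor construction of $C_1$ (the piece of largest Hausdorff dimension) at the cylinder level appropriate for $q$, filling in $P(q)$ elsewhere in $F$ arbitrarily. An intrinsic Jarn\'ik--Besicovitch-type analysis within $C_1$---constructing a nested Cantor subset of $F_t \cap C_1$ and applying Frostman's mass distribution principle, in parallel with the classical argument---then gives $\dim_\textup{H}(F_t \cap C_1) > 0$ for every $t > 1$, and hence $\dim_\textup{H} F_t > 0$.

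For the choice making $F_t$ countable, the idea is to place $P(q)$ adversarially so that every $x \in F$ outside a specific countable set $D$ satisfies $d(x, P(q)) \geq c(x)/q$ for all sufficiently large $q$; since this forces $d(x, P(q)) > 1/q^t$ for every $t > 1$ and all large $q$, it forces $F_t \subseteq D$. Concretely, take $P(q) \cap C_n = \{\inf C_n\}$ once $C_n$ has diameter less than $1/q$, and choose $P(q) \cap C_n$ to consist of only certain ``extreme'' points of the Cantor construction of $C_n$ when $C_n$ is still macroscopic. Then $D = \{0\} \cup \bigcup_q P(q)$ is countable.

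The main obstacle is to make this construction genuinely produce a \emph{countable} $F_t$, rather than just a set of Hausdorff dimension zero: within any positive-dimensional $C_n$, a naive maximal $(1/q)$-separated set typically generates a positive-dimensional intrinsic well-approximable set. Overcoming this requires constructing each $C_n$ with non-uniform contraction ratios and gap sizes---possibly via an inner recursion that mirrors the outer accumulation structure---so that the chosen extreme points of $C_n$ are at distances bounded below by a constant multiple of $1/q$ from every non-extreme $x \in C_n$. Once such a construction is in place, the countability of $F_t$ follows.
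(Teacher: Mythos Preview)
Your positive-dimension half is fine in spirit: choosing $P(q)$ to contain a maximal $(1/q)$-separated subset of the single piece $C_1$ and invoking Theorem~\ref{adthm} for the Ahlfors--David regular set $C_1$ gives $\dim_\textup{H}(F_t\cap C_1)\geq s_1/t>0$ with no need for a hand-built Frostman argument.

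The countable half, however, has a genuine gap. You correctly diagnose the obstacle---any maximal $(1/q)$-separated subset of a positive-dimensional $C_n$ will, by Theorem~\ref{adthm} applied inside $C_n$, force $\dim_\textup{H}(F_t\cap C_n)\geq s_n/t>0$---but you do not actually resolve it. The proposed fix (``non-uniform contraction ratios and gap sizes \dots\ possibly via an inner recursion'') is a wish, not a construction, and it is hard to see how any $C_n$ of positive Hausdorff dimension could admit maximal $(1/q)$-separated subsets whose $q^{-t}$-neighbourhoods meet $C_n$ only in countably many points for \emph{every} $t>1$.

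The paper sidesteps this entirely with a much simpler idea. Take $F_0$ to be the middle third Cantor set and let $C$ be the countable set of centres of its complementary intervals; put $F=F_0\cup C$. Every point of $C$ is an \emph{isolated} point of $F$, and one checks that $C$ is dense enough that a maximal $(1/q)$-separated subset of $C$ is already maximal in all of $F$. Choosing $P(q)\subseteq C$ in this way, one has $B(p,q^{-t})\cap F=\{p\}$ for all $t>1$ once $q$ is large, so $F_t=C$ is countable outright. The key insight you are missing is not to tune the geometry of positive-dimensional pieces, but to manufacture a countable collection of isolated points rich enough to host the abstract rationals by itself. For the other choice one simply takes $P(q)$ to contain a maximal $(1/q)$-separated subset of $F_0$ and applies Theorem~\ref{adthm} to $F_0$.
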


\begin{proof}
Let $F_0$ be the middle third Cantor set (see \cite[Example 4.5]{falconer}) and $F$ be $F_0 \cup C$ where $C$ is the countable set consisting of a single point  in the centre of each complementary interval of $F_0$.  Clearly $\dim_\textup{H} F = \dim_\textup{P} F =s = \log_32>0$.  We choose the first set of abstract rationals such that $\cup_q P(Q) = C$.   More precisely, for $q \geq 1$, let $C_k \subseteq C$ be the set of centres of complementary intervals of length  $3^{-k}$ where $k$ is chosen such that $3^{-(k-1)}/2 \leq 1/q < 3^{-(k-2)}/2$.  Then choose $P(q)$ to be a maximal $(1/q)$-separated subset of $C$ which contains $C_k$.    Note that $P(q)$ is a maximal $(1/q)$-separated subset of $F$ by construction.  Fix $t>1$,  $q \in \mathbb{N}$ and $p \in P(q)$ and let $k$ be as above.  Then
\[
B(p,q^{-t}) \subseteq  B(p, 3^{-t(k-2)}/2^t) = \{p\}
\]
for sufficiently large $q$.  More precisely, once $q$ is large enough so that $k$ is large enough to guarantee
\[
3^{-t(k-2)}/2^t < 3^{-k}/2.
\]
It follows that $F_t = C$ which is countable and, in particular, 
\[
\dim_\textup{H} F_t = \dim_\textup{P} F_t = 0.
\]
For the second choice of abstract rationals, choose $P(q)$ such that it contains a maximal $(1/q)$-separated subset of $F_0$, which we denote by $P_0(q) \subseteq F_0$.  Let $F_{0,t} = (F_0)_t \subseteq F_0$ be the set of points in $F_0$ which are i.o. $t$-approximable by abstract rationals $P_0(q)$.  Then, since $F_0$ is $s$-Ahlfors--David regular, it follows from Theorem \ref{adthm}, which we prove later,  that
\[
\dim_\textup{H} F_t \geq  \dim_\textup{H} F_{0,t} \geq s/t>0
\]
as required.
\end{proof}

The next result is an upgrade of the previous example to show that the lower bound $\dim_\textup{L} F/t$ is sharp in a stronger sense.

\begin{prop} \label{exampleL}
For all $0\leq L\leq H \leq 1$, there exists a compact set $F \subseteq [0,1]$ with
\[
\dim_\textup{L} F = L
\]
and 
\[
\dim_\textup{H} F = \dim_\textup{P} F =H
\]
 such that for some choice of abstract rationals,
\[
\dim_\textup{H} F_t  =L/t = \frac{\dim_\textup{L} F}{t}
\]
for all $t>1$.
\end{prop}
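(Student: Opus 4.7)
The approach is to generalise the construction in the proof of Proposition~\ref{example}. Let $F_0 \subseteq [0,1]$ be an Ahlfors--David regular Cantor set of dimension $H$; for each complementary interval $I$ of $F_0$, place a scaled Ahlfors--David regular Cantor set $E_I$ of dimension $L$ inside the middle half of $I$, with $\mathrm{diam}(E_I) = |I|/2$. Set $F = F_0 \cup \bigcup_I E_I$, which is compact. Countable stability of $\dim_\textup{H}$ and $\dim_\textup{P}$ immediately yields $\dim_\textup{H} F = \dim_\textup{P} F = H$. For $\dim_\textup{L} F \leq L$, take $x \in E_I$ and $R < |I|/4$: then $B(x,R) \cap F = B(x,R) \cap E_I$, so $N_r$ grows like $(R/r)^L$. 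For the matching lower bound $\dim_\textup{L} F \geq L$, a three-way case analysis is required: for $x \in F_0$ one uses Ahlfors--David regularity of $F_0$ directly; for $x \in E_I$ with $R \leq |I|/4$ one uses that of $E_I$; and for $x \in E_I$ with $R > |I|/4$, the ball $B(x,R)$ meets $F_0$ and inherits its $H$-growth.

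Next, define $P(q) \subseteq \bigcup_I E_I$ (disjoint from $F_0$) by choosing, for each complementary interval $I$ of $F_0$ whose diameter is at least a fixed constant multiple of $1/q$, a maximal $(1/q)$-separated subset $P(q) \cap E_I$ of $E_I$, tuned so that Theorem~\ref{adthm} applied to $E_I$ yields $\dim_\textup{H} (E_I)_t = L/t$. Maximality of $P(q)$ in $F$ follows from standard Cantor geometry: every $x \in F_0$ lies within a constant multiple of $1/q$ of some such $E_I$.

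Finally, decompose $F_t = (F_t \cap F_0) \cup \bigcup_I (F_t \cap E_I)$. Because each $E_I$ sits in the middle half of $I$, there exists $c > 0$ with $d(F_0, P(q)) \geq c/q$. For $t > 1$ and $q$ sufficiently large, $c/q > 1/q^t$, so no point of $F_0$ is $t$-approximable: $F_t \cap F_0 = \emptyset$. Moreover the sets $E_I$ (for the relevant $I$) are mutually $(1/q)$-separated, so rationals outside a given $E_I$ cannot $t$-approximate its points; thus $F_t \cap E_I = (E_I)_t$ with respect to $P(q) \cap E_I$, which has Hausdorff dimension $L/t$ by Theorem~\ref{adthm}. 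Countable stability of Hausdorff dimension then gives $\dim_\textup{H} F_t = L/t$.

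The main obstacle is the lower-dimension calculation: verifying $N_r(B(x,R) \cap F) \geq C(R/r)^L$ uniformly over all $(x, r, R)$, in particular at the transitional scale $R \sim |I|/4$ for $x \in E_I$. One must also exhibit the specific maximal $(1/q)$-separated subsets of each $E_I$ realising the stronger dimension bound $L/t$ of Theorem~\ref{adthm} rather than the weaker general bound $(L+1)/t$ from Theorem~\ref{main}. The boundary case $H = 1$ demands a variant construction in which $F_0$ is replaced by a non-Ahlfors--David regular set of Hausdorff and packing dimension $1$ whose complementary intervals remain sufficiently large for the argument above to run.
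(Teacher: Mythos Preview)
Your construction is essentially the one the paper uses: replace the middle-third Cantor set by a central self-similar Cantor set $F_0$ of dimension $H$, insert scaled $L$-Ahlfors--David regular Cantor sets $E_I$ into the complementary intervals, and choose the abstract rationals entirely inside $\bigcup_I E_I$. The argument that $F_t\cap F_0=\emptyset$ for $t>1$ and that $F_t\cap E_I$ coincides with the intrinsically $t$-approximable set of $E_I$ is along the right lines, and the lower-dimension case analysis is reasonable in outline.

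However, there is a genuine gap at the crucial step. You need $\dim_{\textup H}(E_I)_t = L/t$ exactly, and you appeal to Theorem~\ref{adthm} for this. But Theorem~\ref{adthm} only gives $L/t \leq \dim_{\textup H}(E_I)_t \leq (L+1)/t$; it does \emph{not} force the lower bound to be achieved. Your final paragraph compounds the confusion by contrasting ``the stronger dimension bound $L/t$ of Theorem~\ref{adthm}'' with ``the weaker general bound $(L+1)/t$ from Theorem~\ref{main}''---in fact the upper bounds in the two theorems coincide in the Ahlfors--David regular case, so no amount of ``tuning'' within the scope of Theorem~\ref{adthm} yields the equality you need. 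What is actually required is a \emph{specific} choice of abstract rationals on the model set $E$ (say, a middle-$\lambda$ Cantor set) for which a direct covering argument gives the matching upper bound $\dim_{\textup H}E_t \leq L/t$. The paper establishes exactly this in Proposition~\ref{prop:middlelambda}: taking $P(q)$ to be (essentially) the left endpoints of the level-$n$ construction intervals for $\lambda^{-n}\leq q<\lambda^{-(n+1)}$, one covers $E_t$ by $\bigcup_{n\geq m}\bigcup_{p\in P(\lambda^{-n})}B(p,\lambda^{nt})$ and the sum $\sum_n 2^n \lambda^{nt\tau}$ converges for $\tau>L/t$. This is the missing ingredient; without it your upper bound for $\dim_{\textup H}F_t$ is only $(L+1)/t$.

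A minor point: your worry about $H=1$ is legitimate (a central self-similar Cantor set cannot have dimension exactly $1$), and the paper's sketch is equally silent on this boundary case; but when $L=H=1$ one can simply take $F=[0,1]$ and invoke Proposition~\ref{adsharp} with $d=1$, while $L<H=1$ requires a separate construction as you note.
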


\begin{proof}
This proposition is proved in the same way as Proposition \ref{example} but with the middle third cantor set replaced by a central (self-similar) cantor set of dimension $H$ and with the single points placed in the complementary intervals replaced by scaled copies of a fixed set $E$ with $\dim_\textup{L} E = \dim_\textup{H} E  = L$. The set $E$ could be another central cantor set, for example. The scaling is done such that the diameter of a scaled copy is much smaller than the length of the complementary interval it is placed in.  The abstract rationals are chosen to be contained in the countably many copies of $E$ and within each copy $E'$ of $E$ must be chosen such that 
\begin{equation} \label{nice}
\dim_\textup{H} E'_t = \frac{\dim_\textup{H} E'}{t}.
\end{equation}
The details work the same, but the challenge is in finding a way to choose the set $E$ and abstract rationals to satisfy \eqref{nice}.  We choose $E$ to be an $L$-Ahlfors--David regular set with the abstract rationals chosen as in Proposition \ref{prop:middlelambda}, which we prove later.
\end{proof}

There are many other ways in which one could investigate sharpness of Theorem \ref{main}.  For example, our results above are pointwise in the sense that for a given $t$ a set $F$ is constructed. It may be interesting to consider if one can construct sets witnessing sharpness   for all $t \geq 1$ simultaneously.  Another question of sharpness which we have not answered is if the \emph{Hausdorff} dimension of $F_t$ can be estimated from below in general.

\begin{ques}
Is it true that for all $t \geq 1$
\[
\dim_\textup{H} F_t \geq \frac{\dim_\textup{L} F}{t}?
\]
\end{ques}


\subsection{The Ahlfors--David regular case}

It turns out we can say more when $F$ is an Ahlfors--David regular metric space.  Recall that a compact set $F$ is  $s$-Ahlfors--David regular for some $s \geq 0$  if there exists a constant $C>0$ such that 
\[
r^s/C \leq \mathcal{H}^s(B(x,r)) \leq Cr^s
\]
for all $0<r<1$ and all $x \in F$.  In this case it is not hard to see that
\[
\dim_\textup{L} F = \dim_\textup{H} F = \dim_\textup{P} F =  \overline{\dim}_\textup{B} F = s
\]
and that $F$ is doubling, see \cite[Theorem 6.4.1]{fraser}.

\begin{thm} \label{adthm}
Suppose $F$ is a compact   $s$-Ahlfors--David regular.  For all $t \geq 1$
\[
\frac{s}{t} \leq \dim_\textup{H} F_t  \leq \frac{s+1}{t}.
\]
\end{thm}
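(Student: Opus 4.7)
The upper bound $\dim_\textup{H} F_t \leq (s+1)/t$ is immediate from Theorem \ref{main}, since $s$-Ahlfors--David regularity forces $\dim_\textup{P} F = s$.

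For the lower bound, the plan is to construct, for each $\eta > 0$, a Cantor-like subset $E \subseteq F_t$ with $\dim_\textup{H} E \geq (s - \eta)/t$, and then let $\eta \to 0$. Fix $\eta > 0$ and choose integers $q_k$ growing geometrically at the calibrated rate $q_k = q_{k-1}^C$ with $C = 1 + s(t-1)/\eta$ and $q_1$ large. Let $E_1 = B(p_1, q_1^{-t})$ for an arbitrary $p_1 \in P(q_1)$, and inside each level-$(k-1)$ ball $B(p_{k-1}, q_{k-1}^{-t})$ take as children all balls $B(p_k, q_k^{-t})$ with $p_k \in P(q_k) \cap B(p_{k-1}, q_{k-1}^{-t} - q_k^{-t})$. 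Ahlfors--David regularity of $F$, combined with the $(1/q_k)$-separation and maximality of $P(q_k)$, will imply that the number $N_k$ of children per parent satisfies $N_k \asymp (q_{k-1}^{-t} q_k)^s$, and in particular $N_k \to \infty$. Set $E = \bigcap_k E_k \subseteq F_t$.

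Next I would define a probability measure $\mu$ on $E$ by equidistributing mass at each level, so each level-$k$ ball carries mass $\mu_k = (N_2 \cdots N_k)^{-1}$. A telescoping computation yields
\[
\log \mu_k \,=\, -s \log q_k \,+\, s(t-1)\sum_{j<k} \log q_j \,+\, O(1),
\]
and substituting $q_k = q_{k-1}^C$ reduces this to $\mu_k \lesssim q_k^{-(s-\eta)}$. To verify the Frostman-type bound $\mu(B(x,r)) \lesssim r^{(s-\eta)/t}$ for $x \in E$ and $r > 0$ small, pick the unique $k$ with $q_k^{-t} \leq r < q_{k-1}^{-t}$. If $r \leq 1/q_k$, then $(1/q_k)$-separation of $P(q_k)$ forces $B(x,r)$ to meet at most one level-$k$ ball, giving $\mu(B(x,r)) \leq \mu_k \lesssim r^{(s-\eta)/t}$. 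If $r > 1/q_k$, Ahlfors--David regularity bounds $\#(P(q_k) \cap B(x,2r))$ by $\lesssim (rq_k)^s$, so $\mu(B(x,r)) \lesssim (rq_k)^s \mu_k \lesssim r^s q_k^\eta$, and the choice of $C$ is calibrated precisely so that this is $\leq r^{(s-\eta)/t}$ throughout $r \in (1/q_k, q_{k-1}^{-t}]$. The mass distribution principle then yields $\dim_\textup{H} E \geq (s-\eta)/t$, and letting $\eta \to 0$ completes the proof.

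The main obstacle is controlling $\mu$ at the intermediate scales $r \in (1/q_k, q_{k-1}^{-t})$, where a ball $B(x,r)$ meets many level-$k$ children. The estimate $r^s q_k^\eta \leq r^{(s-\eta)/t}$ becomes tight at the upper endpoint $r = q_{k-1}^{-t}$, and this is exactly what pins down the growth rate $C = 1 + s(t-1)/\eta$: slower growth would inflate $\mu_k$ beyond the required bound, whereas faster growth would inflate $q_k^\eta$ and spoil the intermediate bound. Everything else---the counting of children, the disjointness of descendant balls, the uniqueness of the level-$(k-1)$ parent intersecting $B(x,r)$---is a direct consequence of AD regularity and the $t > 1$ hypothesis.
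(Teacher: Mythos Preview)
Your approach is correct and closely parallels the paper's, but with one genuine difference in execution. Both proofs build the same Cantor subset $E\subseteq F_t$ with the equidistributed measure $\mu$, and both control $\mu(B(x,r))$ by the same two-case split according to whether $r$ lies below or above $1/q_k$. The difference is in the growth of $(q_k)$. The paper takes $q_k$ growing so fast that $q_1q_2\cdots q_{k-1}\leq \log q_k$; this makes the telescoped error $\prod_{j<k}q_j^{s(t-1)}$ merely polylogarithmic in $r$, and one obtains $\mu(B(x,r))\lesssim |\log r|^{ts}\,r^{s/t}$ in a single construction, with no auxiliary parameter and no limit. You instead take $q_k=q_{k-1}^{C}$ with $C=1+s(t-1)/\eta$ tuned to a sacrificial $\eta$, which yields the cleaner power bound $\mu(B(x,r))\lesssim r^{(s-\eta)/t}$ at the cost of a final $\eta\to 0$. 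The paper's route is marginally slicker (exact exponent in one shot); yours makes more explicit why the intermediate-scale estimate is the binding constraint, and your pinpointing of $C$ as the value at which the bound becomes tight at $r=q_{k-1}^{-t}$ is exactly the right observation.

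One small correction: in your first case ($r\leq 1/q_k$), the $(1/q_k)$-separation of $P(q_k)$ does not literally force $B(x,r)$ to meet \emph{at most one} level-$k$ ball, only boundedly many---the paper handles this via the doubling property and its constant $C_4$. This costs only a harmless constant and does not affect your argument. Similarly, since the number of children per parent is only $\asymp (q_{k-1}^{-t}q_k)^s$ rather than exactly equal, the mass of a level-$k$ ball picks up a multiplicative error of the form $c^{k}$; with your growth rate this is polylogarithmic in $q_k$ and is absorbed into the $\eta$-loss.
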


\begin{proof}
Given the general bounds, all that remains to establish is
\[
\dim_\textup{H} F_t  \geq \frac{s}{t} 
\]
which we prove now.  The strategy is similar to the general lower bound, except that we need to  control the measure at all scales, not just a sequence of scales. Let $E$ and $\mu$ be as in the proof of Theorem \ref{main}.  Given $r \in (0,q_1^{-t})$, let $k$ be chosen uniquely such that 
\[
q_{k+1}^{-1} \leq r < q_{k}^{-1}.
\]
Since $t\geq 1$ is fixed and the $q_k$ are rapidly increasing, we may assume that either
\begin{equation} \label{case1}
q_{k+1}^{-1} \leq r < q_{k}^{-t} \leq q_{k}^{-1}
\end{equation} 
or
\begin{equation} \label{case2}
q_{k+1}^{-1} < q_{k}^{-t} \leq r  < q_{k}^{-1}.
\end{equation} 
Consider the first case \eqref{case1}.  There is a uniform constant $C_3$ such that an arbitrary ball $B(x,r)$ may intersect $C_3(r/q_{k+1}^{-1})^s$ many $(k+1)$-level cylinders.  This uses the fact that  $P(q_{k+1})$ is $(1/q_{k+1})$-separated and the fact that $F$ is $s$-Ahfors-David regular.  Moreover, each  $(k+1)$-level cylinder carries mass at most
\begin{equation} \label{dunkel}
q_1^{ts}(q_2 \cdots q_{k})^{s(t-1)} (q_{k+1}^{-1})^{s}
\end{equation} 
by following the argument above \eqref{massestimate}.  Therefore
\begin{eqnarray*}
\mu(B(x,r)) & \leq & C_3(r/q_{k+1}^{-1})^s q_1^{ts}(q_2 \cdots q_{k})^{s(t-1)} (q_{k+1}^{-1})^{s} \qquad \text{by \eqref{dunkel}} \\ \\
& = & C_3q_1^{ts}(q_2 \cdots q_{k})^{s(t-1)}\, r^s  \\ \\
& \leq & C_3 \left( \log(q_k)\right)^{ts} q_{k}^{s(t-1)} \,  r^s \qquad \text{by \eqref{rapid}} \\ \\
& \leq & C_3  |\log r |^{ts}  \left(r^{-1/t}\right)^{s(t-1)} \,  r^s  \qquad \text{by \eqref{case1}} \\ \\
& = & C_3  |\log r |^{ts}  r^{s/t}.
\end{eqnarray*}
Now consider the second case \eqref{case2}.  Since $F$ is Ahlfors--David regular it is doubling and, therefore,  there exists a uniform constant $C_4$ such that an arbitrary ball $B(x,r)$ intersects at most $C_4$ many $k$-level cylinders.   This uses the fact that the centres of the  $k$-level cylinders are $(1/q_k)$-separated.  Therefore, using \eqref{dunkel} to estimate the mass of a $k$-level cylinder,
\begin{eqnarray*}
\mu(B(x,r)) &\leq&  C_4q_1^{ts}(q_2 \cdots q_{k-1})^{s(t-1)} (q_k^{-1})^{s} \\ \\
&\leq & C_4 \left(\log(q_k)\right)^{ts} (q_k^{-1})^{s}  \qquad \text{by \eqref{rapid}}
\\ \\
&\leq  & C_4 |\log r|^{ts}  r^{s/t}  \qquad \text{by \eqref{case2}.}
\end{eqnarray*}
Combining the two cases, we have proved that for an arbitrary $x \in E$ and $r \in (0, q_1^{-t})$
\[
\mu(B(x,r)) \leq \max\{C_3,C_4\} |\log r|^{ts}  r^{s/t}
\]
and  the result follows by the mass distribution principle, see \cite[Chapter 4]{falconer}.
\end{proof}

\section{Dirichlet spectrum of Ahlfors--David regular spaces}\label{sec:spectrum}

Notice that in the classical setting of Diophantine approximation on
$[0,1]$ with $P$ defined by
\begin{equation*}
P(q) = \set*{0, 1/q, 2/q, \dots, 1}, \qquad (q\geq 1),
\end{equation*}
we have $s = 1$ and the \emph{upper} bound in Theorem~\ref{adthm}
gives the correct dimension of $F_t$, namely,
$\dim_{\textup H} F_t = 2/t$, the dimension given in the
Jarn\'ik--Besicovitch theorem. Therefore probing sharpness of the
lower bound seems like the most pressing concern.

Notice also that in the classical setting, we have
\begin{gather*}
  2 = \sup\set*{d : F_d = [0,1]} = \inf\set*{d : \Leb(F_d) = 0} \\
  = \sup\set*{d : F_d\textrm{ contains a ball}} = \sup\set*{d :
    F_d\textrm{ essentially contains a ball}}.
\end{gather*}
As a generalisation of this, define the following: for a given $s$-Ahlfors--David
regular set $F$ and choice $P$ of abstract rationals in $F$, define the
\emph{Dirichlet exponent}
\begin{align*}
  d(P) &= \sup\set*{d : F_d \textrm{ contains a ball up to $\mathcal H^s$-measure $0$}} \\
       &= \sup\set*{d : (\exists \textrm{ball } B\subset F)\quad \mathcal H^s(B\setminus F_d) = 0} 
\end{align*}
and the \emph{Dirichlet spectrum}
\begin{equation*}
  \sigma(F) = \set*{d(P) : P \textrm{ is a choice of abstract rationals in } F}.
\end{equation*}
We can then ask the following.

\begin{ques}\label{q:jarnikTF}
  For a given $s$-Ahlfors--David-regular $F$, what is the nature of
  the set $\sigma(F)$, and to what extent does the formula
  \begin{equation*}
    \dim_{\textup H} F_t = \frac{d(P)s}{t}\qquad(t\geq d(P))
  \end{equation*}
  hold for a given choice $P$ of abstract rationals in $F$?
\end{ques}

In exploring this question, we will find, in particular, that the
dimension bounds in Theorem~\ref{adthm} are sharp. First, let us report
what Theorem~\ref{adthm} and the Mass Transference Principle say
immediately about Question~\ref{q:jarnikTF}.

\begin{proposition}\label{prop:spectrum}
  If $F$ is a locally compact, $s$-Ahlfors--David-regular metric space, then
  $\sigma(F) \subset [1, 1 + 1/s]$, and for any choice $P$ of
  abstract rationals in $F$ we have 
  \begin{equation*}
    \dim_{\textup H}F_t \geq \frac{d(P)s}{t}\qquad (t \geq d(P)).
  \end{equation*}
\end{proposition}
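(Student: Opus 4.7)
The proof splits into the inclusion $\sigma(F) \subset [1, 1+1/s]$ and the dimension lower bound. I would handle the two-sided inclusion first, as a short consequence of Theorem~\ref{adthm} and the definition of $d(P)$, and then use the Beresnevich--Velani Mass Transference Principle from \cite{BV06} for the lower bound.

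For $d(P) \geq 1$: maximality of the $(1/q)$-separated set $P(q)$ forces every $x \in F$ to lie within distance $1/q$ of some $p \in P(q)$ for every $q$. Hence $F_1 = F$, so any ball $B \subset F$ trivially satisfies $B \setminus F_1 = \emptyset$, giving $d(P) \geq 1$. For $d(P) \leq 1 + 1/s$: whenever $d < d(P)$, some ball $B \subset F$ has $\mathcal{H}^s(B \cap F_d) = \mathcal{H}^s(B) > 0$ by $s$-AD regularity, whence $\dim_\textup{H} F_d \geq s$. Combined with the upper bound $\dim_\textup{H} F_d \leq (s+1)/d$ from Theorem~\ref{adthm}, this forces $d \leq 1 + 1/s$; taking supremum over $d < d(P)$ yields $d(P) \leq 1 + 1/s$.

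For the lower bound on $\dim_\textup{H} F_t$, the plan is to transfer the full $\mathcal{H}^s$-measure of $F_{d'}$ on some ball (for $d' < d(P)$) to positive Hausdorff $g$-measure of $F_t$ at the reduced exponent $g := sd'/t$. Fix $d' < d(P)$ and $t \geq d(P)$, so $g < s$. Choose a ball $\Omega \subset F$ with $\mathcal{H}^s(\Omega \cap F_{d'}) = \mathcal{H}^s(\Omega)$. The set $F_t$ is the limsup of the balls $\{B(p, 1/q^t) : q \geq 1,\ p \in P(q)\}$; enlarging each radius $r_{p,q} = 1/q^t$ to $r_{p,q}^{d'/t} = 1/q^{d'}$ produces a family whose limsup is $F_{d'}$. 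Apply the Mass Transference Principle with dimension function $f(r) = r^g$: the monotonicity of $r \mapsto r^{g-s}$ is automatic since $g < s$, the full-measure hypothesis on the enlarged balls is exactly the assumption on $\Omega \cap F_{d'}$, and the MTP conclusion reads $\mathcal{H}^g(\Omega \cap F_t) = \mathcal{H}^g(\Omega) = \infty$. This forces $\dim_\textup{H} F_t \geq g = sd'/t$, and sending $d' \nearrow d(P)$ completes the proof.

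The main technicality is verifying that MTP applies in the $s$-AD regular (rather than Euclidean) setting; the original statement in \cite{BV06} is Euclidean, but the extension to locally compact $s$-AD regular metric spaces is by now standard and goes through once the ambient Lebesgue measure is replaced by $\mathcal{H}^s$. A minor bookkeeping point is interpreting ``essentially contains a ball'' in the definition of $d(P)$: we use the equivalent form $\mathcal{H}^s(B \cap F_d) = \mathcal{H}^s(B)$ for some ball $B \subset F$, which is immediate since $\mathcal{H}^s$ is locally finite and positive on balls by AD regularity.
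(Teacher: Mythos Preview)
Your argument follows the same route as the paper: Theorem~\ref{adthm} for the spectrum bounds, then the Mass Transference Principle for the dimension lower bound. The first half is fine and matches the paper almost verbatim.

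There is, however, a genuine technical gap in your invocation of MTP. The hypothesis of \cite[Theorem~3]{BV06} requires the sequence of balls to have centres \emph{in the ambient space} to which the theorem is applied, and the full-measure condition must hold for \emph{every} ball in that space. You take the space to be $\Omega$, and you correctly observe that $\mathcal{H}^s(\Omega\setminus F_{d'})=0$ propagates to all sub-balls of $\Omega$. But the balls $B(p,1/q^{d'})$ defining $F_{d'}$ have centres $p\in P(q)\subset F$, not necessarily in $\Omega$; if you simply discard those with $p\notin\Omega$, you obtain a smaller limsup set and must check that it still has full $\mathcal{H}^s$-measure in every sub-ball of $\Omega$. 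This is not automatic.

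The paper handles exactly this point: it restricts the hierarchy to $P'(q)=P(q)\cap F'$ (with $F'=\Omega$), writes $F_{d'}'$ for the corresponding limsup, and shows that $(F_{d'}\cap F')\setminus F_{d'}'$ is contained in the boundary sphere $S(x',r)$. After shrinking $r$ if necessary so that $\mathcal{H}^s(S(x',r))=0$, one gets $\mathcal{H}^s(F'\setminus F_{d'}')=0$, and now the MTP applies cleanly inside $F'$ with centres in $F'$. Your sketch needs this step (or an equivalent) before the MTP can be invoked; what you flag as ``the main technicality'' (the AD-regular rather than Euclidean ambient) is not the only obstruction.
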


\begin{proof}
  Clearly, since for any $P$ one has $F=F_1$, we have $d(P) \geq
  1$. On the other hand, suppose $\mathcal H^s(F_d)>0$. Then
  $\dim_{\textup H} F_d \geq s$, which, by Theorem~\ref{adthm},
  implies that $s \leq (s+1)/d$, hence $d \leq 1 + 1/s$. Taking
  supremum over all such $d$ shows that $d(P) \leq 1 + 1/s$, so we
  have shown $d(P)\in [1, 1 + 1/s]$ for all $P$.

  Now suppose $d < d(P) \leq t$. It follows that $F_d$
    essentially contains a closed ball $F'=B(x',r) \subset F$, and we
    may suppose that $\mathcal H^s(S(x',r))=0$, where $S(x',r)$
    denotes the sphere of radius $r$ centered at $x'$. (After all, at
    most countably many members of the uncountable disjoint union
    $F'=\bigcup_{0\leq \rho \leq r} S(x',\rho)$ may have positive
    measure, so if we must, we may shrink $r$ until $S(x',r)$ has zero
    measure.) The ball $F'$ is itself an $s$-Ahlfors--David-regular
    metric space. Define a hierarchy of points $P' = P\mid_{F'}$ in
    $F'$ by
  \begin{equation*}
    P'(q) =
    \begin{cases}
      P(q)\cap F'&\textrm{if } P(q)\cap F'\neq \emptyset\\
            x' &\textrm{if } P(q)\cap F' = \emptyset.
    \end{cases}
  \end{equation*}
  Note that as soon as $q$ is large enough we have
  $P'(q) = P(q)\cap F'$, hence $P'(q)$ is $(1/q)$-separated but it may
  not be maximal. That is, $P'$ may not constitute abstract rationals
  in $F'$ as we have defined the term. Still, let us use $F_t'$ to
  denote all the points in $F'$ that are $t$-approximable by
  $P'$. Notice then that
  \begin{equation}\label{eq:insphere}
    (F_d\cap F')\setminus F_d' \subset S(x',r),
  \end{equation}
  for, if $x\in F_d$ and $d(x',x)< r$, then for all $q$ large enough,
  every $p\in P(q)$ with $d(x,p)\leq q^{-d}$ must lie in $F'$. All
  such $p$ are in $P'(q)$, so we have $x\in F_d'$. Importantly for
  us,~(\ref{eq:insphere}) implies that
  $\mathcal H^s(F'\setminus F_d')=0$. 

For any $t\geq d(P)$, we have $F_t' \subset F_t$. Also, since
$\mathcal H^s(F'\setminus F_d')=0$, we have that for any ball
$B\subset F'$,
  \begin{equation*}
    \mathcal H^s (F_d'\cap B) = \mathcal H^s(B).
  \end{equation*}
  So the mass transference principle~\cite[Theorem 3]{BV06} implies
  that for any ball $B\subset F'$ we have
  \begin{equation*}
    \mathcal H^{ds/t} (F_t'\cap B) = \mathcal H^{ds/t}(B) = \infty. 
  \end{equation*}
  Therefore $\dim_{\textup H}F_t' \geq ds/t$, hence
  $\dim_{\textup H} F_t \geq ds/t$. Since $d<d(P)$ was arbitrary, the
  result follows.
\end{proof}

\begin{remark}
  We could of course ask Question~\ref{q:jarnikTF} when $F$ is not
  Ahlfors--David regular. There, Theorem~\ref{main} would guarantee that
  \begin{equation*}
    1 \leq d(P) \leq \frac{\dim_{\textup P} F + 1}{\dim_H F}. 
  \end{equation*}
  However, clearly no such dimension formula as the one in
  Question~\ref{q:jarnikTF} holds in general, even with $s$ replaced
  by $\dim_{\textup{H}}F.$ 
\end{remark}

We can now refine Question~\ref{q:jarnikTF} by asking what kind of
subset $\sigma(F)\subset [1, 1+1/s]$ is. The next results exhibit
cases where $\sigma(F) = [1, 1 + 1/s]$. We will also show that for
certain spaces $F$ and any $d\in \sigma(F)$, there exist abstract
rationals witnessing $d(P)=d$ such that $\dim_{\textup H}F_t = ds/t$
for all $t\geq d$. That is, the full spectrum $\sigma(F)$ can be
realized by abstract rationals satisfying a general
Jarnik--Besicovitch theorem.

In particular, we show that the bounds from Theorem \ref{adthm} are
indeed sharp and, moreover, the dimensions of $F_t$ can depend on the
choice of abstract rationals, even in the Ahlfors--David regular
setting.

\begin{proposition}\label{adsharp}
  For $F=[0,1]$ we have $\sigma(F)= [1,2]$ and for any $d\in\sigma(F)$
  there is a choice $P$ of abstract rationals with $d(P)=d$ such that
  \[
    \dim_{\textup H}F_t = \frac{d(P)}{t} \qquad (t\geq d(P)).
  \]
\end{proposition}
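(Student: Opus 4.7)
The inclusion $\sigma(F) \subseteq [1,2]$ and the dimension lower bound $\dim_{\textup H} F_t \geq d(P)/t$ are already provided by Proposition~\ref{prop:spectrum}, since $F=[0,1]$ is $1$-Ahlfors--David-regular. The real task is, for each prescribed $d \in [1,2]$, to exhibit a choice $P$ of abstract rationals realising $d(P) = d$ and for which the matching upper bound $\dim_{\textup H} F_t \leq d/t$ holds when $t \geq d$.

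The plan is a ``sparse denominator'' construction, in which the only genuinely useful approximations come from rationals whose denominators lie in a carefully thinned-out sequence. Set $\alpha = d - 1 \in [0,1]$ and pick a strictly increasing sequence of positive integers $q_1 < q_2 < \cdots$ satisfying both (i) $q_{k+1} \leq 2 q_k$ for all $k$, and (ii) the counting asymptotic $\#\{k : q_k \leq N\} \asymp N^\alpha$ when $\alpha > 0$ (for instance $q_k = \lceil k^{1/\alpha}\rceil$, adjusted at small $k$), or $q_k = 2^k$ when $\alpha = 0$. For each $q \in \mathbb{N}$ let $k(q) = \max\{k : q_k \leq q\}$ and define
\[
P(q) = \{\, j/q_{k(q)} : 0 \leq j \leq q_{k(q)} \,\}.
\]
Property (i) is exactly what makes $P(q)$ maximal: its points are spaced by $1/q_{k(q)} \geq 1/q$, and because $q < q_{k(q)+1} \leq 2 q_{k(q)}$ every point of $[0,1]$ lies strictly within $1/q$ of $P(q)$, so no further point can be added.

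With this construction, for any $q \in [q_k, q_{k+1})$ the inequality $d(x, P(q)) \leq 1/q^t$ is tightest at the smallest admissible $q$, namely $q = q_k$; hence $F_t$ coincides with the classical limsup set
\[
F_t = \{\, x \in [0,1] : |x - j/q_k| \leq 1/q_k^t \textrm{ for infinitely many } k \textrm{ and some } j \,\}.
\]
The natural cover yields an $s$-dimensional Hausdorff pre-measure bound of order $\sum_k q_k \cdot q_k^{-ts} \asymp \sum_k q_k^{1-ts}$, which by the calibration of $(q_k)$ converges precisely when $s > d/t$; this gives $\dim_{\textup H} F_t \leq d/t$ for every $t$, and at $s=1$ forces $\Leb(F_t) = 0$ for $t > d$, so $d(P) \leq d$. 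For the reverse inequality $d(P) \geq d$, the plan is to show $\Leb(F_d) > 0$: at $t = d$ the sum $\sum_k q_k^{1-d}$ diverges by construction, and quasi-independence of the rational-approximation events across $(q_k)$---a sparse-denominator Khintchine--Beresnevich--Velani (or Gallagher) style argument---then delivers positive Lebesgue measure, upgraded to full measure by a zero--one law for the translation-invariant limsup set $F_d$.

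The main obstacle to anticipate is this quasi-independence input for intermediate $d \in (1,2)$; the endpoints require none of it, with $d=1$ handled trivially via $F_1 = F$ and $d=2$ handled by classical Dirichlet. Once $d(P) = d$ is established, combining the lower bound $\dim_{\textup H} F_t \geq d/t$ from Proposition~\ref{prop:spectrum} with the covering upper bound above yields $\dim_{\textup H} F_t = d/t$ for every $t \geq d$, and since $d \in [1,2]$ was arbitrary we conclude $\sigma(F) = [1,2]$ together with the desired Jarn\'ik--Besicovitch formula.
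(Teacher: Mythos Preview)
Your overall architecture matches the paper's: pick a sparse sequence of ``active'' denominators $q_k$ growing like $k^{1/(d-1)}$, set $P(q)$ to be the rationals with denominator $q_{k(q)}$, read off the upper bound $\dim_{\textup H}F_t\leq d/t$ from the natural cover, and get the lower bound from Proposition~\ref{prop:spectrum} once $d(P)=d$ is known. The endpoints $d=1,2$ are handled in the same way as well.

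The genuine gap is the step $d(P)\geq d$ for $d\in(1,2)$. You assert that ``quasi-independence of the rational-approximation events across $(q_k)$'' gives $\Leb(F_d)>0$, but for your suggested sequence $q_k=\lceil k^{1/\alpha}\rceil$ this is exactly the hard part: the overlap $\Leb(A_k\cap A_l)$ between the events $A_k=\bigcup_j B(j/q_k,q_k^{-d})$ is governed by $\gcd(q_k,q_l)$, which is completely uncontrolled for that sequence. Divergence of $\sum q_k^{1-d}$ does \emph{not} by itself force positive measure of the $\limsup$ set---this is precisely the Duffin--Schaeffer phenomenon---so invoking a ``Gallagher/Khintchine style'' argument without further input is not enough. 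The paper closes this gap by appealing to the Koukoulopoulos--Maynard theorem (the former Duffin--Schaeffer conjecture): with $\psi(q)=q^{-d}$ on $\{\lfloor k^\alpha\rfloor\}$ and $0$ elsewhere, $\sum\psi(q)\varphi(q)=\infty$ via $\varphi(n)\gg n/\log\log n$, whence $F_d$ has full measure.

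If you want to avoid that heavy hammer, you can rescue your argument by choosing the $q_k$ more carefully so that quasi-independence becomes elementary: take the $q_k$ pairwise coprime (e.g.\ a suitably thinned increasing sequence of primes, which by Bertrand still satisfies $q_{k+1}\leq 2q_k$, and which can be thinned to meet $\#\{k:q_k\leq N\}\asymp N^{d-1}$ since $d-1<1$). For coprime denominators the pairwise overlap estimate is classical and the divergence Borel--Cantelli lemma applies directly. Either route---citing Koukoulopoulos--Maynard or constraining the $q_k$---fills the gap; as written, your proposal does neither.
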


\begin{proof}
  The case $d(P)=2$ is achieved by the classical setting of the
  Jarn\'ik--Besicovitch theorem so it remains to consider
  $d \in [1,2)$.  We first prove the case $d=1$, which provides a
  roadmap to the other cases.

  For each $q\in [2^k, 2^{k+1})$ let
  \begin{equation*}
    P(q) = \set*{\frac{a}{2^k} : a = 0, \dots, 2^k}
  \end{equation*}
  be the rationals with denominator $2^k$. Note that $P(q)$ is a
  maximal $(1/q)$-separated set in $F$. In this setting we have
  $d(P) = 1$, which in particular shows $1\in \sigma(F)$. This will follow from Proposition~\ref{prop:spectrum} together with the upper bound for the dimension of $F_t$ which we prove now.  The lower bound 
  \begin{equation*}
    \dim_{\textup H} F_t \geq \frac{1}{t} \qquad (t\geq 1),
  \end{equation*}
  already follows from Theorem \ref{adthm}.  Note that
  \begin{equation*}
    F_t \subset \bigcup_{k\geq 1} \bigcup_{a=0}^{2^k}
    \brackets*{\frac{a}{2^k} - \frac{1}{2^{tk}}, \frac{a}{2^k} + \frac{1}{2^{tk}}}.
  \end{equation*}
  For all $\tau>1/t$ we have that
  \begin{equation*}
    \sum_{k\geq 1} \sum_{a=0}^{2^k} \parens*{\frac{1}{2^{tk-1}}}^\tau
    \ll \sum_{k\geq 1} 2^{(1-\tau t)k},
  \end{equation*}
  which converges. (The notation $\ll$ means that $\leq$ holds up to
  a positive absolute constant factor.) This implies
  $\mathcal H^\tau(F_t)< \infty$ and $\dim_{\textup H}F_t \leq
  \tau$. Therefore
  $\dim_{\textup H}F_t \leq 1/t$. \\

  Now suppose $d\in(1,2)$ and let $\alpha>1$ be such that
  $d=1+1/\alpha$.  For $q\in\NN$ with
  $k^\alpha \leq q < (k+1)^\alpha$, let
  \begin{equation*}
    P(q) = \set*{\frac{a}{\floor{k^\alpha}} : a = 0, \dots, \floor{k^\alpha}}.
  \end{equation*}
  We claim that $d(P)=d$. In fact, this is a consequence of the
  recently proved Duffin--Schaeffer conjecture~\cite{DSC}, which states that if
  $\psi(n):\NN\to\RR$ is such that
  \begin{equation*}
    \sum_q \psi(q)\varphi(q)=\infty, 
  \end{equation*}
  then for almost every $x\in[0,1]$ there are infinitely many $p/q\in \QQ$ such that
  \begin{equation*}
    \abs*{x - \frac{p}{q}} < \psi(q). 
  \end{equation*}
 Here and throughout $\varphi$ is the is the Euler totient function. Consider the function
  \begin{equation*}
    \psi(q) =
    \begin{cases}
      q^{-d}&\textrm{if } q = \floor*{k^\alpha}, k \in \NN \\
      0 &\textrm{otherwise.}
    \end{cases}
  \end{equation*}
  Then
  \begin{align*}
    \sum_q \psi(q)\varphi(q) &= \sum_k \frac{1}{\floor{k^\alpha}^d}\varphi (\floor{k^\alpha}) \\
                             &\geq \sum_k \frac{\varphi (\floor{k^\alpha})}{k^{\alpha +1}} \\
                             &\geq \sum_k \frac{\varphi (\floor{k^\alpha})}{k^{\alpha +1}} \\
                             &\gg \sum_k \frac{1}{k\log\log k^\alpha} = \infty,
  \end{align*}
  this last estimate coming from the growth estimate
  $\varphi(n) \gg n/\log\log n$~\cite{HW}. We may then conclude that
  for almost every $x\in[0,1]$ there are infinitely many $q$ such that
  $x$ is within $q^{-d}$ of some $p\in P(q)$. That is, $F_d$ has full
  measure, hence $d(P) \geq d$. On the other hand, if we had defined
  $\psi$ by $q^{-d'}$ for some $d'> d$, then the above sum would have
  converged, and the Borel--Cantelli lemma would imply that $F_{d'}$
  had measure $0$. This shows that $d(P)=d$, as claimed. 

  Proposition~\ref{prop:spectrum} now gives
  \begin{equation*}
    \dim_{\textup H}F_t \geq \frac{d(P)}{t}
  \end{equation*}
  for every $t\geq d$. The corresponding upper bound comes from a
  cover of $F_t$ by intervals as in the previous case. Here the
  relevant series is
  \begin{equation*}
    \sum_{k\geq 1}\sum_{a=0}^{\floor{k^\alpha}}\parens*{k^{-\alpha t}}^\tau
    \ll \sum_{k\geq 1} k^{\alpha (1-t\tau)}, 
  \end{equation*}
  which converges as long as
  \begin{equation*}
    \tau > \frac{1 +
      1/\alpha}{t} = \frac{d}{t},
  \end{equation*}
  which leads to the upper bound and establishes
  \begin{equation}\label{eq:1}
    \dim_{\textup H}F_t = \frac{1 +
      1/\alpha}{t} = \frac{d(P)}{t}.
  \end{equation}
  Since $d\in(1,2)$ was arbitrary, we are done.
\end{proof}

We investigate Dirichlet exponents for examples of Ahlfors--David
regular $F$ with dimension less than 1.

\begin{proposition}\label{adexample}
  Let $F$ be the middle third Cantor set, whose dimension is
  $s=\log_3 2$.  Then for any $d\in\brackets*{1, 1 + 1/s}$
  there exists a choice of abstract rationals such that $d(F)=d$ and
  \[
    \dim_{\textup{H}}F_t =\frac{d}{t}s \quad (t\geq d).
  \]
\end{proposition}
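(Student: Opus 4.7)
The plan is to mirror Proposition~\ref{adsharp}, with the triadic cylinder structure of $F$ replacing the dyadic lattice on $[0,1]$ and a direct Borel--Cantelli computation against the normalized $s$-dimensional Hausdorff measure on $F$ replacing the Duffin--Schaeffer input. Let $T_m \subset F$ denote the set of the $2^{m+1}$ left- and right-endpoints of the level-$m$ cylinders of $F$: a short check shows $T_m$ is $3^{-m}$-separated and a $3^{-(m+1)}$-net, hence is maximal $(1/q)$-separated for every $q \in [3^m, 3^{m+1}]$. Since $\sigma(F) \subseteq [1, 1+1/s]$ by Proposition~\ref{prop:spectrum}, the task is to realise every value in that interval.

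For $d = 1$ I would set $P(q) = T_k$ for all $q \in [3^k, 3^{k+1}]$. Since $P$ is constant on each range, the cover $F_t \subseteq \bigcup_k \bigcup_{p \in T_k} B(p, 3^{-kt})$ gives the $\tau$-Hausdorff series $\sum_k 2^{k+1} \cdot 3^{-kt\tau}$, convergent iff $\tau > s/t$. This gives $\dim_\textup{H} F_t \leq s/t$, matched by Theorem~\ref{adthm}; taking $\tau = s$ and $t > 1$ gives $\mathcal H^s(F_t) = 0$, hence $d(P) = 1$. For $d = 1 + 1/s$ I take $P(q)$ to be any maximal $(1/q)$-separated subset of $F$ for each $q$: the resulting Borel--Cantelli series $\sum_q q^s \cdot q^{-d's}$ has critical exponent exactly $1 + 1/s$, which together with a quasi-independence argument on the Cantor measure yields $d(P) = 1 + 1/s$ and $\dim_\textup{H} F_t = (s+1)/t$.

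For $d \in (1, 1+1/s)$, set $\alpha = 1/((d-1)s) > 1$ and $n_k = \lfloor k^\alpha \rfloor$, and choose for each $k$ a maximal $(1/q)$-separated subset $\tilde P_k \subset F$ of size $\asymp n_k^s$ that serves as a valid $P(q)$ for every $q \in [n_k, n_{k+1})$. With $P$ essentially constant on each range, the cover $F_t \subseteq \bigcup_k \bigcup_{p \in \tilde P_k} B(p, n_k^{-t})$ yields $\sum_k n_k^s \cdot n_k^{-t\tau} \asymp \sum_k k^{\alpha(s - t\tau)}$, convergent iff $\tau > (s + 1/\alpha)/t = ds/t$ (using $1/\alpha = s(d-1)$). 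This gives both $\dim_\textup{H} F_t \leq ds/t$ and, at $\tau = s$, $\mathcal H^s(F_{d'}) = 0$ for $d' > d$, so $d(P) \leq d$. The matching lower bound $\dim_\textup{H} F_t \geq ds/t$ for $t \geq d$ will follow from Proposition~\ref{prop:spectrum} once $d(P) \geq d$ is established.

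The main obstacle is the lower bound $d(P) \geq d$: for each $d' < d$, I need $F_{d'}$ to have full $\mathcal H^s$-measure inside a ball of $F$. The plan is a Chung--Erd\H{o}s-type second-moment Borel--Cantelli for the events $A_k = \bigcup_{p \in \tilde P_k} B(p, n_k^{-d'})$, whose measures $\mathcal H^s(A_k) \asymp n_k^{s(1-d')}$ satisfy $\sum_k k^{\alpha s(1-d')} = \infty$. The needed quasi-independence estimate $\mathcal H^s(A_j \cap A_k) \lesssim \mathcal H^s(A_j)\mathcal H^s(A_k)$ is the delicate point: the naive choice of $\tilde P_k$ as cylinder endpoints at a single Cantor level per $k$ creates nested events whenever several consecutive $k$ share the same level, destroying quasi-independence and collapsing the effective threshold to $1$. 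The construction of $\tilde P_k$ must therefore be refined so that the approximation ball around each $p \in \tilde P_k$ is located in a freshly-determined sub-cylinder whose address is controlled by a window of ternary digits of $x$ disjoint from those governing the other $A_j$'s. Under the identification of $(F, \mathcal H^s)$ with the Bernoulli $\{0,2\}^{\mathbb N}$ space, this digit-window disjointness produces the factorisation; the remaining bookkeeping, including verifying maximality of the refined $\tilde P_k$ and that the upper-bound series is unaffected, is the main technical content, after which the argument closes exactly as in Proposition~\ref{adsharp}.
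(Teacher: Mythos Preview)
Your treatment of $d=1$ matches the paper's. For $d>1$ there is a genuine gap in establishing $d(P)\ge d$. In the case $d=1+1/s$ you take $P(q)$ to be ``any'' maximal $(1/q)$-separated set and appeal to quasi-independence, but your own $d=1$ construction is exactly such a choice and has $d(P)=1$, so this cannot work without specifying $P$ further. In the intermediate case you correctly diagnose that the constant-on-ranges scheme with the naive endpoint choice nests the events $A_k$, but the proposed digit-window repair is in tension with the structure you have imposed: with $n_k=\lfloor k^\alpha\rfloor$ the Cantor levels $m_k\approx\log_3 n_k$ satisfy $m_{k+1}-m_k\to 0$, so many consecutive $k$ share the same $m_k$ and the digit windows $[m_k,d'm_k]$ governing $A_k$ coincide rather than being disjoint. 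A single $(1/n_k)$-separated $\tilde P_k$ of size $\asymp n_k^s$ has its points pinned at scale $n_k^{-1}$; relocating the ``fresh'' digits forces you to abandon either constancy on $[n_k,n_{k+1})$ or maximality. Some weaker pairwise-overlap estimate might still feed a Chung--Erd\H{o}s argument, but none is supplied, and labelling this ``bookkeeping'' understates the issue.

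The paper avoids Borel--Cantelli for the lower bound on $d(P)$ altogether by letting $P(q)$ \emph{vary} within each triadic range. For $3^n\le q<3^{n+1}$ it sets $k=\lfloor nd\rfloor$ and $P(q)=(E_n^L+\ell_m)\cup\{\text{extra endpoints for maximality}\}$, where $E_n^L$ is the set of level-$n$ left endpoints and $\ell_m$ cycles (as $q$ runs over the range) through the $2^{k-n}$ level-$k$ left endpoints inside a fixed level-$n$ cylinder; note $3^{n+1}-3^n\ge 2^{k-n}$ precisely because $d\le 1+1/s$. The shifts exhaust all level-$k$ sub-cylinders, yielding a deterministic Dirichlet theorem: every $x\in F$ lies within $3^{-k}\approx q^{-d}$ of some $P(q)$ with $q<3^{n+1}$, so $d(P)\ge d$ holds for all points, not merely almost all. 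The upper bound on $\dim_{\textup H}F_t$ then exploits the periodicity of $q\mapsto P(q)$ to strip the redundancy from the natural cover, and $d(P)\le d$ follows from a first-moment Borel--Cantelli against $\mathcal H^s$.
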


\begin{proof}
  (i) \emph{The case $d=1$.}  First, for $3^n \leq q < 3^{n+1}$ let $P(q)$ be the endpoints of
  the basic intervals in the $n$th construction set
  $E_n$. Theorem~\ref{adthm} tells us that for any $t\geq 1$,
  $\dim_HF_t \geq s/t$. For the upper bound, note that $F_t$ is
  covered by
  \begin{equation*}
    F_t \subset \bigcup_{n \geq m} \bigcup_{p\in P(3^n)}
    \brackets*{p-3^{-nt}, p + 3^{-nt}}
  \end{equation*}
  for any $m>0$. For $\tau\geq 0$, the sum of diameters is
  \begin{equation*}
    \sum_{n \geq m} 2^{n+2} 3^{-nt\tau},
  \end{equation*}
  which converges as long as $2 < 3^{t\tau}$, that is, if
  $\tau > s/t$. This shows the other inequality in dimension, hence
  $\dim_H F_t = s/t$.

  Notice that in this example we have that for every $x\in F$ there
  are infinitely many $q$ for which there is $p\in P(q)$ satisfying
  $\abs{x-p} \leq 1/(3q)$. The previous paragraph shows that as soon
  as we increase the exponent beyond $1$, the dimension
  drops. Therefore, the $d(F) = 1$ with this choice of $P$.\\

  (ii)  \emph{The case $d=1+1/s$. }  For $n\geq 0$, let
  \begin{equation*}
    k(n) = k = \floor*{1 + n\parens*{1+\frac{1}{s}}},
  \end{equation*}
  that is, $k(n)$ is the greatest integer with the property that each
  basic interval of $E_n$ contains at most $3^{n+1}-3^n$ basic
  intervals of $E_k$.  Denote by
  \begin{equation*}
    E_n^L = \bigcup_{\substack{I \subset E_n \\ \textrm{basic
    interval}}} \set*{\min I } = \set{\ell_1, \dots, \ell_{2^n}}
  \end{equation*}
  the set of left endpoints $\ell_i$ of the basic intervals of $E_n$. For
  $3^n \leq q < 3^{n+1}$ put
  \begin{equation*}
    m = \parens*{q-3^n \quad (\bmod 2^{k-n})} + 1
  \end{equation*}
  and define
  \begin{equation*}
    P(q) = \parens*{E_n^L + \ell_m}\cup \set*{\textrm{enough endpoints of $E_n$ to get maximality}}.
  \end{equation*}
  In particular, $P(3^n)$ consists exactly of the endpoints of the
  basic intervals of $E_n$, as in the previous example. (See
  Figure~\ref{fig:cantorrationals} for an illustration of $P(1)$
  throught $P(9)$.) Importantly, we have, for $3^n \leq q < 3^{n+1}$
  \begin{equation*}
    2^{n-1} = \frac{\#P(3^n)}{2}\leq \#P(q) \leq \#P(3^n) = 2^n.
  \end{equation*}

  The ``Dirichlet theorem'' here is: For every $x\in F$ and $n\in\NN$,
  there exists $q\leq 3^{n+1}$ and $p\in P(q)$ such that
  \begin{equation*}
    \abs*{x- p} \leq 3^{-k} \leq \frac{1}{q^{k/(n+1)}}.
  \end{equation*}
  Since
  \begin{equation*}
    \frac{k(n)}{n+1} \sim 1 + \frac{1}{s},
  \end{equation*}
  the Dirichlet exponent here is $d(F) = 1 + 1/s$.

  By Proposition~\ref{prop:spectrum} we have
  $\dim_H F_t \geq d(P) s/t = \frac{s + 1}{t}$ for all $t\geq
  d(P)$. For the upper bound, we cover
  \begin{equation*}
    F_t \subset \bigcup_{q\geq 3^m} \bigcup_{p\in P(q)} B(p, q^{-t})
    \subset \bigcup_{q\geq 3^m} \bigcup_{p\in P(q)} B(p, 3^{-nt})
  \end{equation*}
  The corresponding sum of diameters, with $\tau>0$, is
  \begin{equation*}
    \sum_{q\geq 3^m} \sum_{p \in P(q)} 3^{-nt\tau} = \sum_{n\geq m} 2^{k}
    3^{-nt\tau} \leq \sum_{n\geq m} 2^{n(1 + 1/s)}
    3^{-nt\tau} ,
  \end{equation*}
  which converges if $\tau>(s + 1)/t$. This leads to the desired
  upper bound, so we have
  \begin{equation*}
    \dim_H F_t = \frac{s + 1}{t} = \frac{d(F)}{t}s.
  \end{equation*}

    \begin{figure}\label{fig:cantorrationals}
      \begin{tikzpicture}


        \node at (-8, 4) {$P(1)$};

        \draw[ultra thick, |-|] (-6, 4) -- (6, 4);

        \fill[red] (-6, 4) circle (4pt);

        \fill[blue] (6, 4) circle (4pt);

        
        \node at (-8, 3){$P(2)$};

        \draw[ultra thick, |-|] (-6, 3) -- (-2, 3);

        \draw[ultra thick, |-|] (2, 3) -- (6, 3);

        \fill[red] (2, 3) circle (4pt);

        \fill[blue] (-6, 3) circle (4pt);

        
        \node at (-8, 2){$P(3)$};

        \draw[ultra thick, |-|] (-6, 2) -- (-2, 2);

        \draw[ultra thick, |-|] (2, 2) -- (6, 2);

        \fill[red] (-6,2) circle (4pt);

        \fill[blue] (-2,2) circle (4pt);

        \fill[red] (2,2) circle (4pt);

        \fill[blue] (6,2) circle (4pt);


        \node at (-8, 1){$P(4)$};

        \draw[ultra thick, |-|] (-6, 1) -- (-50/9, 1);

        \draw[ultra thick, |-|] (-46/9, 1) -- (-42/9, 1);

        \draw[ultra thick, |-|] (-10/3, 1) -- (-26/9, 1);

        \draw[ultra thick, |-|] (-22/9, 1) -- (-18/9, 1);

        \draw[ultra thick, |-|] (6, 1) -- (50/9, 1);

        \draw[ultra thick, |-|] (46/9, 1) -- (42/9, 1);

        \draw[ultra thick, |-|] (10/3, 1) -- (26/9, 1);

        \draw[ultra thick, |-|] (22/9, 1) -- (18/9, 1);

        \fill[red] (-46/9, 1) circle (4pt);

        \fill[red] (26/9, 1) circle (4pt);

        \fill[blue] (-2, 1) circle (4pt);

        \fill[blue] (6, 1) circle (4pt);


        \node at (-8, 0){$P(5)$};

        \draw[ultra thick, |-|] (-6, 0) -- (-50/9, 0);

        \draw[ultra thick, |-|] (-46/9, 0) -- (-42/9, 0);

        \draw[ultra thick, |-|] (-10/3, 0) -- (-26/9, 0);

        \draw[ultra thick, |-|] (-22/9, 0) -- (-18/9, 0);

        \draw[ultra thick, |-|] (6, 0) -- (50/9, 0);

        \draw[ultra thick, |-|] (46/9, 0) -- (42/9, 0);

        \draw[ultra thick, |-|] (10/3, 0) -- (26/9, 0);

        \draw[ultra thick, |-|] (22/9, 0) -- (18/9, 0);

        \fill[red] (-10/3, 0) circle (4pt);

        \fill[red] (42/9, 0) circle (4pt);

        \fill[blue] (-6, 0) circle (4pt);

        \fill[blue] (2, 0) circle (4pt);


        \node at (-8, -1){$P(6)$};

        \draw[ultra thick, |-|] (-6, -1) -- (-50/9, -1);

        \draw[ultra thick, |-|] (-46/9, -1) -- (-42/9, -1);

        \draw[ultra thick, |-|] (-10/3, -1) -- (-26/9, -1);

        \draw[ultra thick, |-|] (-22/9, -1) -- (-18/9, -1);

        \draw[ultra thick, |-|] (6, -1) -- (50/9, -1);

        \draw[ultra thick, |-|] (46/9, -1) -- (42/9, -1);

        \draw[ultra thick, |-|] (10/3, -1) -- (26/9, -1);

        \draw[ultra thick, |-|] (22/9, -1) -- (18/9, -1);

        \fill[red] (-22/9, -1) circle (4pt);

        \fill[red] (50/9, -1) circle (4pt);

        \fill[blue] (-6, -1) circle (4pt);

        \fill[blue] (2, -1) circle (4pt);


        \node at (-8, -2){$P(7)$};

        \draw[ultra thick, |-|] (-6, -2) -- (-50/9, -2);

        \draw[ultra thick, |-|] (-46/9, -2) -- (-42/9, -2);

        \draw[ultra thick, |-|] (-10/3, -2) -- (-26/9, -2);

        \draw[ultra thick, |-|] (-22/9, -2) -- (-18/9, -2);

        \draw[ultra thick, |-|] (6, -2) -- (50/9, -2);

        \draw[ultra thick, |-|] (46/9, -2) -- (42/9, -2);

        \draw[ultra thick, |-|] (10/3, -2) -- (26/9, -2);

        \draw[ultra thick, |-|] (22/9, -2) -- (18/9, -2);

        \fill[red] (-6, -2) circle (4pt);

        \fill[red] (2, -2) circle (4pt);

        \fill[blue] (-2, -2) circle (4pt);

        \fill[blue] (6, -2) circle (4pt);


        \node at (-8, -3){$P(8)$};

        \draw[ultra thick, |-|] (-6, -3) -- (-50/9, -3);

        \draw[ultra thick, |-|] (-46/9, -3) -- (-42/9, -3);

        \draw[ultra thick, |-|] (-10/3, -3) -- (-26/9, -3);

        \draw[ultra thick, |-|] (-22/9, -3) -- (-18/9, -3);

        \draw[ultra thick, |-|] (6, -3) -- (50/9, -3);

        \draw[ultra thick, |-|] (46/9, -3) -- (42/9, -3);

        \draw[ultra thick, |-|] (10/3, -3) -- (26/9, -3);

        \draw[ultra thick, |-|] (22/9, -3) -- (18/9, -3);

        \fill[red] (-46/9, -3) circle (4pt);

        \fill[red] (26/9, -3) circle (4pt);

        \fill[blue] (-2, -3) circle (4pt);

        \fill[blue] (6, -3) circle (4pt);


        \node at (-8, -4){$P(9)$};

        \draw[ultra thick, |-|] (-6, -4) -- (-14/3, -4);

        \draw[ultra thick, |-|] (6, -4) -- (14/3, -4);

        \draw[ultra thick, |-|] (10/3, -4) -- (2, -4);

        \draw[ultra thick, |-|] (-10/3, -4) -- (-2, -4);

        \fill[red] (2, -4) circle (4pt);

        \fill[red] (-6, -4) circle (4pt);

        \fill[blue] (-14/3, -4) circle (4pt);

        \fill[red] (14/3, -4) circle (4pt);

        \fill[red] (-10/3, -4) circle (4pt);

        \fill[blue] (-2, -4) circle (4pt);

        \fill[blue] (6, -4) circle (4pt);

        \fill[blue] (10/3, -4) circle (4pt);

        \fill[red] (2, -4) circle (4pt);


        \node at (0, -5){$\vdots$};

        \node at (-4, -5){$\vdots$};

        \node at (-8, -5){$\vdots$};

        \node at (4, -5){$\vdots$};

      \end{tikzpicture}
      \caption{The red dots are the elements of $\parens*{E_n^L+ \ell_m}$ and the
        blue dots are the endpoints of $E_n$ we have included to
        achieve maximality. United they are $P(q)$.}
    \end{figure}
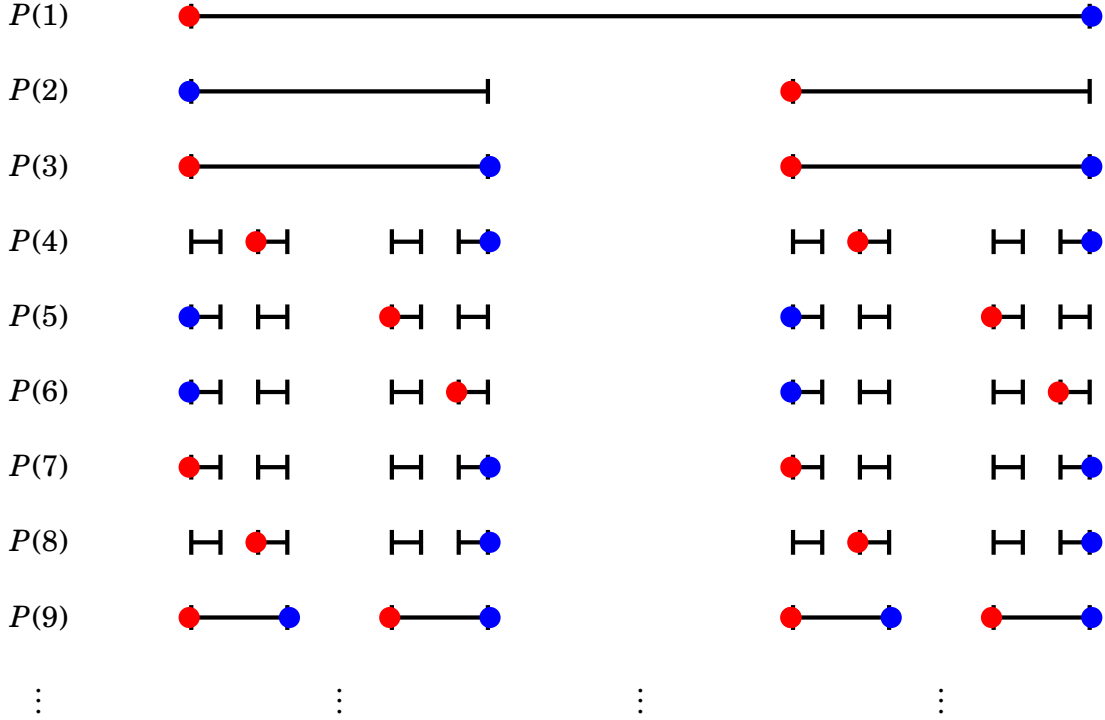

    (iii) \emph{The intermediate case.} Let $d\in (1, 1 + 1/s)$. We now modify the previous
    construction by using instead
    \begin{equation*}
      k(n) = k = \floor*{nd}
    \end{equation*}
    and keeping the rest of the definitions intact. The same Dirichlet
    theorem from the previous construction continues to hold, but now we have
    \begin{equation*}
      \frac{k(n)}{n+1} \sim d,
    \end{equation*}
    so that $d(F) \geq d$.

    For the other bound, let $d' > d$, and note that $F_{d'}$ is covered by
    \begin{equation*}
      F_{d'} \subset \bigcup_{q\geq q_0}\bigcup_{p\in P(q)}B\parens*{p, q^{-d'}}
    \end{equation*}
    for any $q_0$, and therefore the measure of $F_{d'}$ is bounded by
    \begin{align*}
      \sum_{q\geq q_0}\sum_{p\in P(q)} q^{-d's} &\leq \sum_{n\geq n_0}\sum_{3^n\leq q < 3^{n+1}}\sum_{p\in P(q)}q^{-d's}\\
                                                &\leq \sum_{n\geq n_0}\sum_{3^n\leq q < 3^{n+1}}2^n 3^{-nd's}\\
                                                &\leq \sum_{n\geq n_0}2^{k-n}2^n 3^{-nd's}\\
                                                &= \sum_{n\geq n_0}3^{(k-nd')s}. 
    \end{align*}
    This sum converges because $k/n\sim d$ and $d'>d$, so by sending
    $n_0\to \infty$ we see that $F_{d'}$ has zero measure. Therefore
    $d(F) = d$ as wanted.

    Let $t \geq d$. Then, by Proposition~\ref{prop:spectrum}
    $\dim_{\textup H} F_t \geq \frac{ds}{t}$. For the upper upper bound we cover
    \begin{equation*}
      F_t \subset \bigcup_{n\geq 1}\underbrace{\bigcup_{3^n\leq q\leq 3^{n+1}}}_{\textrm{redundant}}\bigcup_{p\in P(q)} B(p, q^{-t})
    \end{equation*}
    and observe that the indicated union has a redundancy in it owing
    to the fact that as $q$ ranges through $(3^n, 3^{n+1})\cap \ZZ$, $P(q)$ cycles with period $2^{k-n}$. That is, we may more efficiently cover by
    \begin{equation*}
      F_t \subset \bigcup_{n\geq 1}\bigcup_{q= 3^n}^{3^n +2^{k-n}}\bigcup_{p\in P(q)} B(p, q^{-t})
    \end{equation*}
    The diameter sum is
    \begin{align*}
      \sum_{n\geq 1}\sum_{q= 3^n}^{3^n +2^{k-n}}\sum_{p\in P(q)} \diam\parens*{B(p, q^{-t})}^\tau &\ll       \sum_{n\geq 1}\sum_{q= 3^n}^{3^n +2^{k-n}}\sum_{p\in P(q)} q^{-t\tau} \\
                                                                                                  &=   \sum_{n\geq 1}\sum_{q= 3^n}^{3^n +2^{k-n}}\#P(q)q^{-t\tau} \\
                                                                                                  &\leq   \sum_{n\geq 1}2^{k-n}\#P(3^n)3^{-nt\tau} \\
                                                                                                  &\leq\sum_{n\geq 1}2^k 3^{-nt\tau}\\
                                                                                                        &\leq\sum_{n\geq 1}3^{ks-nt\tau}.
    \end{align*}
    This sum converges if $\tau > ds/t$, therefore
    $\dim_{\textup H} F_t \leq ds/t$ and we are done.
  \end{proof}

The above result holds for more general Ahlfors--David regular sets than just the middle third Cantor set.  Since it is especially useful to have examples with different dimensions having minimal Dirichlet exponent, see Proof of Proposition \ref{exampleL}, we record the following partial extension.

  \begin{proposition}\label{prop:middlelambda}
Let $F$ be the central  Cantor set having dimension
    $s = -\log 2/\log \lambda$, where $\lambda\in(0,1/2)$. Then there is
    a choice $P$ of abstract rationals such that
    $\dim_{\textup H} F_t = s/t$ for all $t\geq 1$.
  \end{proposition}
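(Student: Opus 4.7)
The plan is to adapt the argument in Proposition~\ref{adexample}(i), which handled the middle-third case $\lambda = 1/3$, to a general $\lambda \in (0,1/2)$. Since $F$ is compact and $s$-Ahlfors--David regular, Theorem~\ref{adthm} automatically provides the lower bound $\dim_\textup{H} F_t \geq s/t$ for every choice of abstract rationals and every $t \geq 1$, so it remains to exhibit a specific choice of abstract rationals attaining the matching upper bound.

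For each $n \geq 0$, let $V_n \subseteq F$ denote the $2^{n+1}$ endpoints of the $2^n$ basic intervals of the $n$-th level $E_n$. A direct computation gives that the minimum separation among points of $V_n$ is $\delta_n = \lambda^{n-1}\min(\lambda, 1-2\lambda) \asymp \lambda^n$. Setting $q_n = \lceil 1/\delta_n \rceil$, the set $V_n$ is $(1/q_n)$-separated and $q_n \asymp \lambda^{-n}$. For each $q \in [q_n, q_{n+1})$ I define $P(q)$ to be $V_n$ together with any extra points of $F$ required for maximality. Ahlfors--David regularity then gives $\#P(q) \asymp q^s$, and since $q_n^s \asymp 2^n$ and $q_{n+1}^s \asymp 2^{n+1}$, we have $\#P(q) \asymp 2^n$ uniformly throughout the block $[q_n, q_{n+1})$.

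With this setup the upper bound proceeds along the template of Proposition~\ref{adexample}(i). For any $q \in [q_n, q_{n+1})$ and $p \in P(q)$ we have $B(p, q^{-t}) \subseteq B(p, q_n^{-t})$, so the cover of $F_t$ collapses block-by-block to
\[
F_t \subseteq \bigcap_{N} \bigcup_{n : q_n \geq N}\bigcup_{p \in P(q_n)} B(p, q_n^{-t}),
\]
whose $\tau$-diameter sum is controlled, up to constants, by $\sum_{n : q_n \geq N} 2^n q_n^{-t\tau} \asymp \sum_n (2\lambda^{t\tau})^n$. This geometric series is summable exactly when $2\lambda^{t\tau} < 1$, equivalently $\tau > -\log 2/(t\log \lambda) = s/t$. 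Letting $N \to \infty$ therefore gives $\mathcal{H}^\tau(F_t) = 0$ for every $\tau > s/t$, hence $\dim_\textup{H} F_t \leq s/t$.

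The main point requiring care is the maximality bookkeeping: in principle the augmenting points used to bring $V_n$ up to a maximal $(1/q)$-separated set depend on $q$, and if different $q$ in the same block produce distinct augmentations, the total count of centres contributing to the cover could exceed $O(2^n)$. This is easily handled by fixing, once and for all, a maximal $(1/q_{n+1})$-separated superset $W_n \supseteq V_n$ of cardinality $\asymp 2^{n+1}$, and requiring that every augmenting point for $q \in [q_n, q_{n+1})$ be drawn from $W_n$; then the cover over the block collapses to a union of at most $O(2^n)$ balls of radius $q_n^{-t}$, and the argument above proceeds verbatim. Specialising to $\lambda = 1/3$ recovers the middle-third case.
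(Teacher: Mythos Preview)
Your argument follows the same route as the paper's: pick the abstract rationals at level $q$ from among the level-$n$ Cantor endpoints for $q$ in a block of size $\asymp\lambda^{-n}$, so that the cover of $F_t$ collapses block-by-block to $O(2^n)$ balls of radius $\asymp\lambda^{nt}$, and then sum the resulting geometric series; the lower bound comes from Theorem~\ref{adthm}. The paper uses the left endpoints $E_n^L$ with blocks $[\lambda^{-n},\lambda^{-(n+1)})$ where you use all endpoints $V_n$ with blocks $[q_n,q_{n+1})$, but this is cosmetic.

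One technical point deserves more care. Your device for controlling the augmenting points---drawing them from a fixed maximal $(1/q_{n+1})$-separated set $W_n$---does not by itself force the resulting $P(q)$ to be maximal in $F$: a $(1/q)$-separated subset $S\subseteq W_n$ that is maximal \emph{among subsets of $W_n$} only guarantees $d(x,S)<1/q+1/q_{n+1}$ for every $x\in F$, not $d(x,S)<1/q$, so there may still be points of $F\setminus W_n$ that can be appended. (The paper's phrase ``enough endpoints of $E_n$ to get maximality'' is comparably informal.) For $\lambda\le 1/3$ the issue evaporates: one has $\delta_{n+1}=\lambda^{n+1}=\max_{x\in F}d(x,V_n)$, so $V_n$ is already a maximal $(1/q)$-separated set for every $q\in[q_n,q_{n+1})$ and no augmentation is required. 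For $\lambda\in(1/3,1/2)$ one genuinely needs extra points, and a short additional argument using the explicit Cantor geometry (rather than an abstract $W_n$) is needed to pin down a fixed finite pool of size $O(2^n)$ from which maximal $P(q)$ can always be selected.
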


  \begin{proof}
    The proof of this statement is essentially the same as the proof of Proposition \ref{adexample} (i). Using the analogous notation, for $\lambda^{-n} \leq q  < \lambda^{-n-1}$ let
    \begin{equation*}
      P(q) = E_n^L \cup \set*{\textrm{enough endpoints of $E_n$ to get maximality}}.
    \end{equation*}
    Notice that $2^{n-1} \leq P(q) \leq 2^n$. Notice that $P(\lambda^{-n})=E_n^L$ and every point of $F$ is in the $\lambda^{n}$-neigbourhood of every $P(\lambda^{-n})$, so that $d(F)\ge 1$. 
    
    What is more, for any $m$,
    \[
    F_t\subset \bigcup_{n\ge m}\bigcup_{p\in P(\lambda^{-n})}[p-\lambda^{-nt}, p+\lambda^{-nt}].
    \]
   This implies that 
   \[
   \mathcal H^\tau (F_t) \le \sum_{m\ge n} 2^n \lambda^{-nt\tau},
   \]
    which converges if $\tau>s/t$. This implies two things: for $t>1$, we immediately obtain $\dimH F_t<\dimH F$ so that $d(F)= 1$. Further, we also see that $\dimH F_t\le s/t$. Since Theorem \ref{adthm} tells us that
    $\dim_{\textup H}F_t \geq s/t$, we have finished the proof  of the theorem. 
  \end{proof}


\begin{thebibliography}{99}
\bibitem[AB]{AB}
D. Allen and B. B\'ar\'any. {\em Hausdorff measures of shrinking targets on self-conformal sets.}, preprint,  arXiv:1911.03410. 

\bibitem[B19]{B19}
S. Baker.
{\em An analogue of Khintchine's theorem for self-conformal sets.}
Math. Proc. Cambridge Philos. Soc. 167 (2019), no. 3, 567–597. 


\bibitem[BV06]{BV06}
V. Beresnevich and S. Velani. {\em A mass transference principle and the Duffin-Schaeffer conjecture for Hausdorff measures.} Ann. of Math. (2) 164 (2006), no. 3, 971–992. 

\bibitem[BFR11]{BFR11}
R. Broderick, L. Fishman and A. Reich.
{\em Intrinsic approximation on Cantor-like sets, a problem of Mahler.}
Mosc. J. Comb. Number Theory 1 (2011), no. 4, 3–12. 

\bibitem[BD16]{BD16}
Y. Bugeaud and A. Durand. 
{\em Metric Diophantine approximation on the middle-third Cantor set.}
J. Eur. Math. Soc. (JEMS) 18 (2016), no. 6, 1233–1272. 

\bibitem[BBDF09]{velanibook}
 V. Beresnevich, V. Bernik, M. Dodson and S. Velani.
\emph{Classical metric Diophantine approximation revisited}, Analytic number theory, 38--61, Cambridge Univ. Press, Cambridge,
(2009).

\bibitem[F97]{techniques}
K.~J. Falconer.
 {\em Techniques in Fractal Geometry},
John Wiley, (1997).

\bibitem[F14]{falconer}
K. J. Falconer.
{\em Fractal Geometry: Mathematical Foundations and Applications},
 John Wiley \& Sons, Hoboken, NJ, 3rd. ed., (2014).




\bibitem[FMS18]{FMS18}
L. Fishman, K. Merrill and D. Simmons. 
{\em Uniformly de Bruijn sequences and symbolic Diophantine approximation on fractals. }
Ann. Comb. 22 (2018), no. 2, 271–293. 

\bibitem[FKMS18]{FKMS18}
 L. Fishman, D. Kleinbock, K. Merrill and D. Simmons. {\em Intrinsic Diophantine approximation on manifolds: general theory.} Trans. Amer. Math. Soc. 370 (2018), no. 1, 577–599. 

\bibitem[FS14]{FS14}
L. Fishman and D. Simmons
{\em Intrinsic approximation for fractals defined by rational iterated function systems: Mahler's research suggestion}.
Proc. Lond. Math. Soc. (3) 109 (2014), no. 1, 189–212. 

\bibitem[F20]{fraser}
J. M. Fraser. 
{\em Assouad dimension and fractal geometry}, 
CUP, Tracts in Mathematics Series,  222, (2020).

\bibitem[HW]{HW}
G. H.   Hardy and E. M. Wright
  {\em An Introduction to the Theory of Numbers},
OUP, 6th Edition (2008).

\bibitem[LSV07]{LSV07}
J.  Levesley, C. Salp and S. Velani. {\em On a problem of K. Mahler: Diophantine approximation and Cantor sets.} Math. Ann. 338 (2007), no. 1, 97–118.

\bibitem[KM]{DSC}
 D.  Koukoulopoulos and J. Maynard
  {\em Duffin--Schaeffer Conjecture},
  Ann. Math. 192 (1),  (2020), 251--307. 
  
  
\bibitem[S21]{S21}
J. Schleischitz.
{\em On intrinsic and extrinsic rational approximation to Cantor sets.}
Ergodic Theory Dynam. Systems 41 (2021), no. 5, 1560–1589. 

\end{thebibliography}
\end{document}